\journalname{}
\date{ \phantom{b} \vspace{45mm}\phantom{e}}
\def\d{{\rm d}}
\def\R{{\mathbb R}}
\def\bigo{{\mathcal O}}
\def\AA{\bigo\hskip-2.4pt\iota}
\def\ones{1\hskip-2.2pt{\mathrm l}}
\newcommand\calA{{\cal A}}
\newcommand\calB{{\cal B}}
\newcommand\calD{{\cal D}}
\newcommand\calS{{\cal S}}
\begin{document}

\title{Stability and convergence of time discretizations of quasi-linear evolution equations of Kato type}
\thanks{This work has been supported  by Deutsche Forschungsgemeinschaft, SFB 1173.}
\titlerunning{Time discretization of quasi-linear evolution equations of Kato type}

\author{Bal\'{a}zs Kov\'{a}cs \and Christian Lubich}
\institute{
Mathematisches Institut, Universit\"at T\"ubingen, Auf der Morgenstelle,
  D-72076 T\"ubingen, Germany.\\
  \email{\{Kovacs,Lubich\}@na.uni-tuebingen.de}}

\authorrunning{B. Kov\'{a}cs and Ch. Lubich}
\date{}

\maketitle
\begin{abstract} Semidiscretization in time is studied for a class of quasi-linear evolution equations in a framework due to Kato, which applies to symmetric first-order hyperbolic systems and to a variety of fluid and wave equations. In the regime where the  solution is sufficiently regular, we show stability and optimal-order convergence of the linearly implicit and fully implicit midpoint rules and of higher-order implicit Runge--Kutta methods that are algebraically stable and coercive, such as the collocation methods at Gauss nodes.

\keywords{Quasi-linear evolution equations, symmetric hyperbolic systems, dispersive equations, implicit midpoint rule, implicit Runge--Kutta method, algebraic stability, coercivity, energy estimates, error bounds.}
\subclass{65M12}
\end{abstract}

\section{Introduction}
\label{sect:intro}
In a very insightful paper published in 1975, Kato~\cite{Kato75} presents a concise framework for quasi-linear evolution equations in a Banach space, proves local well-posedness of the initial value problem within this framework and shows that the framework and results apply to a variety of quasi-linear partial differential equations. He lists symmetric hyperbolic systems of the first order, wave equations,
Korteweg--de Vries equation,
Navier--Stokes and Euler equations,
equations for compressible fluids, magnetohydrodynamic equations, coupled Maxwell and Dirac equations --- and adds ``etc.". Particularly noteworthy appears the application to symmetric hyperbolic systems in the sense of Friedrichs (in arbitrary space dimension), which is a large and fundamental class of problems.

While Kato's paper has been influential and highly cited in the analysis of nonlinear hyperbolic and dispersive partial differential equations, it has apparently gone unnoticed in the numerical literature for such equations. Kato's framework has been modified and generalized to further classes of partial differential equations, by himself and coauthors in \cite{HugKM,Kato1975cauchy} shortly after \cite{Kato75}, and by other researchers until recently, e.g., in \cite{dorfler2015local,diss_Muller}. To our knowledge, the only numerical paper related to Kato's framework is the recent work of Hochbruck \& Pa{\v{z}}ur \cite{HochbruckPazur} who study  the implicit Euler time discretization in a modified Kato framework that was developed by M\"uller \cite{diss_Muller} for dealing with a class of quasi-linear Maxwell equations. We acknowledge that it was \cite{HochbruckPazur} and \cite{diss_Muller} that led us to the present work.

Here we show that Kato's original framework from \cite{Kato75}, when restricted to Hilbert spaces (which are mostly used in the applications), combines remarkably well with the technique of ``energy estimates" for time discretizations, that is, with the use of positive definite and semi-definite bilinear forms for proving stability and error bounds. We show this first for the implicit midpoint rule in Section~3, and then (in Section~4) for implicit Runge--Kutta methods such as the Gauss and Radau IIA methods of arbitrary orders, which have the properties of  algebraic stability and coercivity, notions that are due to Burrage \& Butcher~\cite{BurB} and Crouzeix~\cite{Cro} (for algebraic stability) and to Crouzeix and Raviart \cite{CroR} (for coercivity); see also \cite{DekV,HairerWannerII}. Although these notions were developed and recognized as important properties in the context of stiff ordinary differential equations in the same decade in which Kato's paper appeared, it seems that no link between these analytical and numerical theories was made. With a delay of some decades, this is now done in the present paper --- in view of both, the perfectly fitting connection of the analytical framework and the numerical methods, and the undiminished significance of the considered evolution equations in applications.

We study only time discretization in this paper. Effects of truncation of an unbounded spatial domain and of space discretization are not considered here. Moreover, we work in a regime where a sufficiently regular solution exists. Of course, we are aware that shocks may develop in finite time in quasi-linear hyperbolic equations. Nevertheless, for many cases within the class of evolution equations considered (in particular, in problems of wave propagation and dispersive equations), regular solutions exist for sufficiently long times of interest (or even for all times), and it is then important to understand the mechanisms that yield stability and convergence of numerical discretizations.

%
%
%
%
%

\section{Kato's framework in a Hilbert space setting}
We consider a quasi-linear evolution equation (with $\dot{} = \d/\d t$)
\begin{equation} \label{Q}
\dot u + A(u)u = f(u)
\end{equation}
in the following setting, which is a Hilbert space version of the framework in Kato's paper \cite{Kato75}.
 Let $X$ and $Y$ be two real Hilbert spaces such that $Y$ is densely and continuously embedded in $X$. We denote the inner product on $X$ by $(\cdot,\cdot)$ and the norms on $X$ and $Y$ by
$$
|\cdot| = \|\cdot \|_X, \quad \|\cdot\| = \|\cdot \|_Y.
$$
For convenience we choose the norms such that $|y|\le \|y\|$ for all $y\in Y$.
We assume throughout this paper that for every $R>0$ the following assumptions are satisfied, with real numbers $M_R^A$, $M_R^B$, $L_R^A$, $L_R^B$, $\ell_R^X$, $\ell_R^Y$ depending only
on~$R$:

(K1) ({\it m-accretivity} \cite[Section V.10]{Kato-book}) For every $y\in Y$, the closed linear operator $A(y)$ on $X$  has the open left complex half-plane in the resolvent set and satisfies the  bound
\begin{equation}
\label{accretive}
(w,A(y)w) \ge 0 \qquad\text{for all }\ w\in D(A(y)).
\end{equation}
Moreover, the domain $D(A(y))$ contains the space $Y$, and there is the $Y$-locally uniform bound, for $y\in Y$ with $\|y\|\le R$,
\begin{equation}
\label{A-bound}
|A(y)w| \le M_R^A \, \|w\| \qquad\text{for all }\ w\in Y.
\end{equation}

(K2) ({\it Kato's commutator condition})
There exists an isometry $S:Y\to X$, self-adjoint as a linear operator on $X$, with the following property: For every $y\in Y$ with $\|y\|\le R$,
\begin{equation}
\label{S}
SA(y)S^{-1} = A(y) + B(y)
\end{equation}
(with equality of domains), where $B(y)$ is $Y$-locally uniformly bounded on $X$:
\begin{equation}
\label{B-bound}
|B(y)v| \le M_R^B \, |v| \qquad\text{for all }\ v\in X.
\end{equation}

(K3) ({\it $Y$-local Lipschitz conditions}) For all $y,\widetilde y \in Y$ with $\|y\|\le R$,
$\|\widetilde y\|\le R$,
\begin{align}
\label{Lip-A}
&\bigl|(A(y)-A(\widetilde y))w \bigr| \le L_R^A\, | y-\widetilde y |\, \|w\| \qquad\text{for all }\ w\in Y,
\\
\label{Lip-B}
&\bigl|(B(y)-B(\widetilde y))v \bigr| \le L_R^B\, \| y-\widetilde y \|\,\, |v| \qquad\, \text{for all }\ v\in X.
\end{align}

(K4) ({\it Semilinear term}) The function $f: X \to X$ is $Y$-locally Lipschitz-continuous in $X$ and $Y$:
For all $y,\widetilde y \in Y$ with $\|y\|\le R$,
$\|\widetilde y\|\le R$,
\begin{align}
\label{Lip-f-X}
|f(y)-f(\widetilde y)| &\le \ell_R^X\, | y-\widetilde y |,
\\
\label{Lip-f-Y}
\|f(y)-f(\widetilde y)\| &\le \ell_R^Y\, \| y-\widetilde y \|.
\end{align}

In \cite{Kato75}, Kato just assumes Banach spaces instead of Hilbert spaces, and he requires that $-A(y)$ is the generator of a  contraction semigroup on $X$. On a Hilbert space, this condition is equivalent to (K1) by the Lumer--Phillips theorem \cite[p.\,14]{Pazy}.

Under these conditions, Kato  \cite[Theorem 6]{Kato75} proves local existence and uniqueness of a solution  to \eqref{Q} in $C([0,\bar t];Y)\cap C^1([0,\bar t];X)$ (for some $\bar t>0$)  for initial data in $Y$.

He then proceeds to show that (K1)--(K4) are indeed satisfied for a wide variety of  quasi-linear partial differential equations, as listed in the introduction. In these applications, he has typically
\begin{equation}\label{sobolev}
X=L_2(\R^d),\ Y=H^s(\R^d), \ \text{and the isometry } S=(I-\Delta)^{s/2}
\end{equation}
for an exponent $s>0$ that is sufficiently large so that the $s$th-order Sobolev space $H^s$ is a Banach algebra ($s>d/2$).

Moreover, Kato \cite[Theorem 7]{Kato75} also gives a perturbation result. Here we give another perturbation result together with its simple proof, because its time-discrete versions will be important later in this paper.

Suppose $u(t)\in Y$ solves \eqref{Q} for $0\le t \le T$ and $u^\star(t)\in Y$ solves \eqref{Q} up to a defect $d(t)\in Y$ for $0\le t \le T$:
\begin{equation} \label{Q-d}
\dot u^\star + A(u^\star)u^\star = f(u^\star) + d.
\end{equation}

\begin{lemma}  \label{lem:pert-cont}
In the above situation, suppose that, for $0\le t \le T$,
$$
 \|u^\star(t)\|\le R \quad\text{ and }\quad Su^\star(t)\in Y \text{ with }\ \| Su^\star(t) \| \le K.
$$
 Then, there exists $\delta>0$, which depends only on $K$, $R$ and $T$ such that for perturbations satisfying
 $$
 \| u(0)- u^\star(0)\|^2 + \int_0^T \| d(s) \|^2 \, \d s \le \delta^2,
 $$
 the error $u-u^\star$ satisfies, for $0\le t \le T$,
\begin{align*}
\| u(t) - u^\star(t) \|^2 &\le C_Y  \Bigl(  \| u(0)- u^\star(0)\|^2 + \int_0^t \| d(s) \|^2 \, \d s \Bigr),
\\
| u(t) - u^\star(t) |^2 &\le C_X  \Bigl(  | u(0)- u^\star(0)|^2 + \int_0^t | d(s) |^2 \, \d s \Bigr) ,
\end{align*}
where $C_Y$  depends only on $K$, $R$ and $T$, and $C_X$ depends only on $R$
and~$T$.
\end{lemma}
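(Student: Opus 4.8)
The plan is to run two Gronwall-type energy estimates for the error $e = u - u^\star$, one in the $X$-norm and one in the $Y$-norm, the latter measured through the isometry $S$ and made tractable by Kato's commutator condition (K2). Subtracting \eqref{Q-d} from \eqref{Q} and writing $A(u)u - A(u^\star)u^\star = A(u)e + (A(u)-A(u^\star))u^\star$, I obtain the error equation
$$\dot e + A(u)e = -\bigl(A(u)-A(u^\star)\bigr)u^\star + \bigl(f(u)-f(u^\star)\bigr) - d.$$
Throughout, the constants from (K1)--(K4) are to be used at a radius $R'$ (say $R' = R+1$) for which the a priori bound $\|u(t)\| \le R'$ will be secured at the end; recall $\|u^\star\| \le R \le R'$ is given.

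For the $X$-estimate I take the $X$-inner product of the error equation with $e$. Since $e\in Y\subseteq D(A(u))$, accretivity (K1) gives $(A(u)e, e) \ge 0$, so that $\tfrac12\tfrac{\d}{\d t}|e|^2 \le -(\bigl(A(u)-A(u^\star)\bigr)u^\star, e) + (f(u)-f(u^\star), e) - (d, e)$. Using \eqref{Lip-A} with $\|u^\star\| \le R$, then \eqref{Lip-f-X}, and Young's inequality on the defect term, each right-hand term is bounded by a constant multiple of $|e|^2$ plus $\tfrac12|d|^2$, the constants depending only on $R'$ (hence on $R$). Gronwall's lemma yields the second assertion with $C_X$ depending only on $R$ and $T$.

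The heart of the proof is the $Y$-estimate. Since $S$ is an isometry with $\|\cdot\| = |S\cdot|$ on $Y$, I set $w = Se$, so that $\|e\| = |w|$, apply $S$ to the error equation, and take the $X$-inner product with $w$. The key is to rewrite the $S$-conjugated operators via (K2) in the form $SA(y) = (A(y)+B(y))S$ (valid on $D(A(y))$ by the stated equality of domains): this turns $SA(u)e$ into $(A(u)+B(u))w$, so that once more $(A(u)w,w)\ge 0$ is discarded and only the bounded commutator term $-(B(u)w,w)$, estimated by $M_{R'}^B|w|^2$ through \eqref{B-bound}, remains. The genuinely delicate contribution is $S\bigl(A(u)-A(u^\star)\bigr)u^\star$; applying the same identity to $u^\star$ produces the crucial cancellation
$$S\bigl(A(u)-A(u^\star)\bigr)u^\star = \bigl(A(u)-A(u^\star)\bigr)Su^\star + \bigl(B(u)-B(u^\star)\bigr)Su^\star,$$
in which the awkward factor $u^\star$ is replaced by $z := Su^\star$. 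Now \eqref{Lip-A} and \eqref{Lip-B} apply with $\|z\| = \|Su^\star\| \le K$ and $|z| = \|u^\star\| \le R$, and using $|e| \le \|e\| = |w|$ together with $\|u-u^\star\| = |w|$ this term is controlled by $(L_{R'}^A K + L_{R'}^B R)\,|w|^2$. The semilinear and defect terms are handled by \eqref{Lip-f-Y}, the isometry identities $|S(f(u)-f(u^\star))| = \|f(u)-f(u^\star)\|$ and $|Sd| = \|d\|$, and Young's inequality. Collecting everything gives $\tfrac{\d}{\d t}|w|^2 \le 2\beta|w|^2 + \|d\|^2$ with $\beta$ depending only on $K$ and $R$, and Gronwall yields the first assertion with $C_Y = \exp(2\beta T)$ depending only on $K$, $R$, $T$.

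It remains to secure the a priori bound $\|u(t)\| \le R'$ underlying the use of the constants at radius $R'$; this is the main obstacle, and I would close it by a continuation (bootstrap) argument. On the maximal subinterval $[0,t^\ast]$ on which $\|u\| \le R'$ holds, the $Y$-estimate already gives $\|e(t)\|^2 \le C_Y\delta^2$, hence $\|u(t)\| \le R + \sqrt{C_Y}\,\delta$. Choosing $\delta$ so small (depending only on $K$, $R$, $T$ through $C_Y$) that $\sqrt{C_Y}\,\delta < 1$ forces $\|u(t)\| < R' = R+1$ strictly on $[0,t^\ast]$; by continuity of $t\mapsto\|u(t)\|$ this precludes $t^\ast < T$, so the bound, and thus both estimates, hold on all of $[0,T]$. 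A final technical point to keep honest is that the $Y$-estimate manipulations require $w = Se \in Y$, so that $w\in D(A(u))$ and the commutator identity applies; this uses the regularity $Su^\star \in Y$ from the hypotheses together with the corresponding regularity of $Su$, and the energy identities are to be read in the sense this regularity justifies.
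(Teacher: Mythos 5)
Your proposal is correct and follows essentially the same route as the paper's own proof: the same error equation, the same use of (K2) to conjugate by $S$ and produce the cancellation $S(A(u)-A(u^\star))u^\star = (A(u)-A(u^\star))Su^\star + (B(u)-B(u^\star))Su^\star$, accretivity to discard the $A$-term, and Gronwall in both norms, with the smallness of $\delta$ securing the a priori $Y$-bound on $u$ by continuation (the paper works at radius $2R$ rather than $R+1$, a purely cosmetic difference). Your explicit bootstrap paragraph spells out what the paper compresses into one sentence, but it is the same argument.
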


\noindent
{\it Remark\/} In the situation~\eqref{sobolev} the condition $Su^\star(t)\in Y$ means $u^\star(t)\in H^{2s}$. This higher regularity is again ensured locally in time for initial values in $H^{2s}$, using Kato's theory with $2s$ in place of $s$.

\begin{proof} The error $e=u-u^\star$ satisfies the error equation
\begin{equation}\label{err-eq-cont}
\dot e + A(u)e = - \bigl( A(u)-A(u^\star) \bigr) u^\star + \bigl(f(u)-f(u^\star)\bigr) -d.
\end{equation}
Using \eqref{S} as $A(y)=S^{-1}A(y)S + S^{-1}B(y)S$, this becomes
\begin{align*}
&\dot e + S^{-1}A(u)Se + S^{-1}B(u)Se
\\
&= -S^{-1} \bigl( A(u)-A(u^\star) \bigr) Su^\star -
S^{-1} \bigl( B(u)-B(u^\star) \bigr) Su^\star +  \bigl(f(u)-f(u^\star)\bigr) -d.
\end{align*}
On applying the operator $S$ on both sides we thus have
\begin{align*}
&S\dot e + A(u)Se + B(u)Se
\\&
= - \bigl( A(u)-A(u^\star) \bigr) Su^\star -
\bigl( B(u)-B(u^\star) \bigr) Su^\star  + \bigl(Sf(u)-Sf(u^\star)\bigr) - Sd.
\end{align*}
Using the accretivity \eqref{accretive} and the bounds \eqref{B-bound}--\eqref{Lip-f-Y} and recalling that $S$ is an isometry between $Y$ and $X$, we therefore obtain, as long as $\| u(t) \|\le 2R$,
\begin{align*}
\tfrac12\tfrac{\d}{\d t} \| e \|^2 &= \tfrac12 \tfrac{\d}{\d t} | Se |^2 = (Se,S\dot e)
\\
&\le M_{2R}^B \|e\|^2 + L_{2R}^A \|e\|\,|e|\, \| Su^\star\| + L_{2R}^B \|e\|^2\, | Su^\star| +
\ell_{2R}^Y \|e\|^2 + \|e\|\, \|d\|
\\
&\le (M_{2R}^B + L_{2R}^A K + L_{2R}^B R + \ell_{2R}^Y +\tfrac1{2T}) \|e\|^2 + \tfrac T2 \|d\|^2,
\end{align*}
and the error bound in the $Y$-norm follows with Gronwall's inequality. Choosing $\delta$ so small that $C_Y\delta\le R$, the condition $\| u(t) \|\le 2R$ then remains satisfied for $0\le t \le T$. Taking in \eqref{err-eq-cont} the inner product with $e$ and using  \eqref{accretive}, \eqref{Lip-A}, \eqref{Lip-f-X} gives us
$$
\tfrac12 \tfrac{\d}{\d t} | e |^2 = (e,\dot e) \le (L_{2R}^AR+\ell_{2R}^X) |e|^2 + |e|\,|d|,
$$
and finally the Gronwall inequality yields the error bound in the $X$-norm.
\qed
\end{proof}

\section{Linearly implicit and fully implicit midpoint rules}
For the time discretization of \eqref{Q} we first consider variants of the implicit midpoint rule. For a positive stepsize $\tau$ and integers $n=0,1,2,\dots$, the solution $u(t)$ to \eqref{Q} with initial value $u_0$ is approximated at $t_n=n\tau$ by $u_n$, which is determined by
\begin{equation}
\label{mpr}
\frac{u_{n+1} - u_n}\tau + A(\widehat u_{n+1/2})\frac{u_{n+1}+u_n}2 = f(\widehat u_{n+1/2}).
\end{equation}
Here we set either
\begin{itemize}
\item[(FI)\!\!\!\!\!\!\!\!] $\qquad\widehat u_{n+1/2} = \frac{u_{n+1}+u_n}2\ \ $ for the fully implicit midpoint rule, or
\item[(LI)\!\!\!\!\!\!\!\!] $\qquad\widehat u_{n+1/2} = u_n + \tfrac12 (u_n-u_{n-1})\ \ $ for a linearly implicit midpoint rule.
\end{itemize}
In the latter case, we set $\widehat u_{1/2}=u_0$ in the first step.

For ease of presentation, we just consider constant stepsizes in this paper, but all our results generalize to variable stepsizes (with a bounded ratio of subsequent stepsizes) without any additional difficulty.

\subsection{Stability of the linearly implicit midpoint rule}

We begin with the stability analysis of the linearly implicit method, \eqref{mpr} with (LI) in the setting of Section~2. As is clear from the framework of Section~2, it is important to control the $Y$-norm of the numerical solution. This can be done {\it per se} (Lemma~\ref{lem:Y-bound-li}) or in comparison with the exact solution (Lemma~\ref{lem:pert-li}).

\begin{lemma} \label{lem:Y-bound-li}
Suppose that for all $k\le n$ we have $u_k\in Y$ with $\| u_k\| \le R$. Then there exist
 $\tau_R>0$ and $C_R,\gamma_R\ge 0$, which depend only on $R$ through the constants in conditions (K1)--(K4), such that for stepsizes $\tau\le\tau_R$ the linearly implicit midpoint rule \eqref{mpr} with (LI) has  a unique solution $u_{n+1}\in Y$. Moreover, this is bounded by
$$
\| u_{n+1} \| \le (1+C_R\tau) \| u_n \| + \tau \gamma_R.
$$
\end{lemma}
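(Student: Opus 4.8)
The plan is to mirror the continuous perturbation estimate of Lemma~\ref{lem:pert-cont} at the discrete level, with the extra preliminary task of establishing existence, uniqueness and $Y$-regularity of $u_{n+1}$. Since $\widehat u_{n+1/2}$ depends only on the given iterates (with $\widehat u_{n+1/2}=\tfrac32 u_n-\tfrac12 u_{n-1}$ for $n\ge1$ and $\widehat u_{1/2}=u_0$), the scheme \eqref{mpr} with (LI) is \emph{linear} in $u_{n+1}$; writing $\widehat u=\widehat u_{n+1/2}$, which satisfies $\|\widehat u\|\le 2R$, it reads
\[
\bigl(I+\tfrac\tau2 A(\widehat u)\bigr)u_{n+1}=\bigl(I-\tfrac\tau2 A(\widehat u)\bigr)u_n+\tau f(\widehat u).
\]
The m-accretivity (K1) places $-2/\tau$ in the resolvent set of $A(\widehat u)$, so $I+\tfrac\tau2 A(\widehat u)$ is boundedly invertible on $X$ for every $\tau>0$ and yields a unique $u_{n+1}\in D(A(\widehat u))$. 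The content of the lemma is to upgrade this to $u_{n+1}\in Y$ and to control $\|u_{n+1}\|=|Su_{n+1}|$.

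First I would establish the $Y$-bound. The natural analogue of the proof of Lemma~\ref{lem:pert-cont} is to apply $S$ and use the commutator identity \eqref{S} to replace $SA(\widehat u)$ by $(A(\widehat u)+B(\widehat u))S$, so that the estimate is governed by the $S$-conjugated operator $A(\widehat u)+B(\widehat u)$. Concretely, I would write $u_{n+1}=C_\tau u_n+\tau\bigl(I+\tfrac\tau2 A(\widehat u)\bigr)^{-1}f(\widehat u)$ with the Cayley transform $C_\tau=\bigl(I+\tfrac\tau2 A(\widehat u)\bigr)^{-1}\bigl(I-\tfrac\tau2 A(\widehat u)\bigr)$, and use \eqref{S} in the form of the \emph{resolvent conjugation}
\[
S\bigl(I+\tfrac\tau2 A(\widehat u)\bigr)^{-1}S^{-1}=\bigl(I+\tfrac\tau2(A(\widehat u)+B(\widehat u))\bigr)^{-1},
\]
valid as bounded operators on $X$ once $\tau$ is small enough that the right-hand operator is invertible (a bounded perturbation, by \eqref{B-bound}, of the accretive $A(\widehat u)$). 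This identity shows that $\bigl(I+\tfrac\tau2 A(\widehat u)\bigr)^{-1}$ maps $Y$ into $Y$, whence $u_{n+1}\in Y$ (using $f(\widehat u)\in Y$ from (K4)), and it turns the scheme into $Su_{n+1}=\widetilde C_\tau\,Su_n+\tau\bigl(I+\tfrac\tau2(A(\widehat u)+B(\widehat u))\bigr)^{-1}Sf(\widehat u)$, with $\widetilde C_\tau$ the $S$-conjugate of $C_\tau$.

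It then remains to bound the two $X$-operator norms. Testing $z=\bigl(I+\tfrac\tau2(A(\widehat u)+B(\widehat u))\bigr)^{-1}x$ against $x$ and using the accretivity \eqref{accretive} together with $|B(\widehat u)z|\le M_{2R}^B|z|$ gives $(1-\tfrac\tau2 M_{2R}^B)|z|^2\le|z|\,|x|$, hence $\|(I+\tfrac\tau2(A(\widehat u)+B(\widehat u)))^{-1}\|\le(1-\tfrac\tau2 M_{2R}^B)^{-1}\le1+c\tau$ for $\tau\le\tau_R$; an analogous computation with the numerator $I-\tfrac\tau2(A(\widehat u)+B(\widehat u))$ gives $\|\widetilde C_\tau\|\le1+c'\tau$. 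Taking $|\cdot|$ in the conjugated scheme and using $|S\,\cdot|=\|\cdot\|$ then yields $\|u_{n+1}\|\le(1+C_R\tau)\|u_n\|+\tau\,2\|f(\widehat u)\|$, and since $2\|f(\widehat u)\|\le 2\|f(0)\|+2\,\ell_{2R}^Y\|\widehat u\|=:\gamma_R$ depends only on $R$ by \eqref{Lip-f-Y}, the asserted bound follows. Crucially, the accretivity is invoked only on genuine domain elements $z\in D(A(\widehat u))$ produced by the resolvent, never on $Su_{n+1}$ itself.

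The main obstacle is precisely this last point. A direct energy estimate---applying $S$ to the differential form of the scheme and then pairing $Su_{n+1}$ with $(A(\widehat u)+B(\widehat u))\tfrac12 S(u_{n+1}+u_n)$---is only formal, because $A(\widehat u)$ loses a derivative (in the model case $A(\widehat u)\tfrac12(u_{n+1}+u_n)\in H^{s-1}$ but not $H^s$), so that $\tfrac12 S(u_{n+1}+u_n)$ does \emph{not} lie in $D(A(\widehat u))$ and the conjugated equation need not hold in $X$. Carrying out all estimates at the level of the bounded Cayley transform and resolvent, whose $S$-conjugates are furnished by the commutator condition (K2) together with its equality-of-domains clause, is the device that circumvents this and constitutes the technical heart of the argument.
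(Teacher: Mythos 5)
Your proof is correct and is in essence the same as the paper's: existence, uniqueness and $Y$-regularity of $u_{n+1}$ come from inverting an operator of the form $I+\tfrac\tau2\bigl(A(\widehat u_{n+1/2})+B(\widehat u_{n+1/2})\bigr)$ on $X$ after conjugating by $S$ via \eqref{S}, and the $Y$-bound comes from the accretivity \eqref{accretive} of $A$ together with the boundedness \eqref{B-bound} of $B$, applied to $Su_{n+1/2}$. Expanding $|Su_{n+1}|^2-|Su_n|^2$ in your Cayley-transform formulation reproduces exactly the inequality the paper obtains by taking the inner product of the $S$-conjugated scheme with $Su_{n+1/2}$, so the two arguments differ only in packaging.

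One remark on your closing paragraph: the claim that the direct energy estimate is ``only formal'' because $\tfrac12 S(u_{n+1}+u_n)$ need not lie in $D(A(\widehat u_{n+1/2}))$ is not accurate. By the very resolvent representation you use to prove existence, $Su_{n+1/2}=\bigl(I+\tfrac\tau2(A(\widehat u_{n+1/2})+B(\widehat u_{n+1/2}))\bigr)^{-1}\bigl(Su_n+\tfrac\tau2 Sf(\widehat u_{n+1/2})\bigr)$, so $Su_{n+1/2}$ lies in $D\bigl(A(\widehat u_{n+1/2})+B(\widehat u_{n+1/2})\bigr)=D(A(\widehat u_{n+1/2}))$ (the domains coincide since $B$ is bounded on $X$), and pairing it with $(A+B)Su_{n+1/2}$ and invoking \eqref{accretive} is rigorous. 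The loss of a derivative you describe concerns $A(\widehat u_{n+1/2})u_{n+1/2}$ landing only in $X$ rather than in $Y$, which is irrelevant here; what matters is domain membership of $Su_{n+1/2}$ in $X$, and that is supplied by the construction. Your extra machinery is therefore a legitimate safeguard, but not a necessary one.
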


\begin{proof} Let us introduce the abbreviations
\begin{equation}\label{u-abbr}
u_{n+1/2} = \frac{u_{n+1} + u_n}2, \quad\  \dot u_{n+1/2} = \frac{u_{n+1} - u_n}\tau ,
\end{equation}
so that the numerical method \eqref{mpr} reads more concisely
$$
\dot u_{n+1/2} + A(\widehat u_{n+1/2}) u_{n+1/2} = f(\widehat u_{n+1/2}).
$$
With \eqref{S}, this is written equivalently as
$$
\dot u_{n+1/2} + S^{-1}A(\widehat u_{n+1/2}) Su_{n+1/2} + S^{-1}B(\widehat u_{n+1/2}) Su_{n+1/2}= f(\widehat u_{n+1/2}),
$$
where we note that $\| \widehat u_{n+1/2} \| = \| \frac32 u_n - \frac12 u_{n-1} \| \le 2R$. We apply $S$ to both sides of the equation and  obtain a linear equation in $X$ for $Su_{n+1}$ with the operator
$
\tau^{-1} I + A(\widehat u_{n+1/2}) + B(\widehat u_{n+1/2}),
$
which by (K1)  is invertible if $1/\tau > \|B(\widehat u_{n+1/2})\|$. In view of \eqref{B-bound} this is satisfied if $\tau M_{2R}^B <1$. Hence, under this stepsize restriction we have a unique solution $u_{n+1}\in Y$.

To derive the bound for $\| u_{n+1} \|$, we take the inner product with $Su_{n+1/2}$ in the equation. With the accretivity \eqref{accretive} and the bound \eqref{B-bound} we obtain
$$
(Su_{n+1/2},S\dot u_{n+1/2}) \le M_{2R}^B |Su_{n+1/2}|^2 + |Su_{n+1/2}| \, |Sf(\widehat u_{n+1/2})|.
$$
The term on the left-hand side is
$$
(Su_{n+1/2},S\dot u_{n+1/2}) = \frac1{2\tau}\, \bigl( |Su_{n+1}|^2-|Su_n|^2 \bigr) =
\frac1{2\tau}\, \bigl( \|u_{n+1}\|^2-\|u_n\|^2 \bigr).
$$
On the right-hand side we note
\begin{align*}
|Sf(\widehat u_{n+1/2})| = \|f(\widehat u_{n+1/2})\| &\le \| f(\widehat u_{n+1/2})- f(0)\| + \| f(0) \|
\\
&\le \ell_{2R}^Y\, 2R + \| f(0) \| =:c_{2R}.
\end{align*}
Hence we obtain
$$
\| u_{n+1} \|^2 \le \| u_n \|^2 + \tau M_{2R}^B \| u_{n+1/2} \|^2 + \tau c_{2R}  \| u_{n+1/2} \|.
$$
Using that $\| u_{n+1/2} \|^2 \le \frac12 (\| u_{n+1} \|^2 + \| u_{n} \|^2)$, the result follows.
\qed
\end{proof}

Suppose that $u_n\in Y$ solves \eqref{mpr} with (LI) for $0\le n\tau \le T$, and $u_n^\star\in Y$ (which will later be taken as the exact solution value $u(t_n)$) solves \eqref{mpr} with (LI) up to a defect $d_{n+1/2}\in Y$ for $0\le n\tau \le T$:
\begin{equation}
\label{mpr-star}
\frac{u_{n+1}^\star - u_n^\star}\tau + A(\widehat u_{n+1/2}^\star)\frac{u_{n+1}^\star+u_n^\star}2 = f(\widehat u_{n+1/2}^\star) +d_{n+1/2},
\end{equation}
with $\widehat u_{n+1/2}^\star = u_n^\star + \tfrac12 (u_n^\star-u_{n-1}^\star)$.

We then have the following time-discrete version of Lemma~\ref{lem:pert-cont}.

\begin{lemma}  \label{lem:pert-li}
In the above situation, suppose that, for $0\le n\tau \le T$,
$$
 \|u_n^\star\|\le R \quad\text{ and }\quad Su_n^\star\in Y \text{ with }\ \| Su_n^\star \| \le K.
$$
 Then, there exist  $\overline\tau>0$ and $\delta>0$, which depend only on $K$, $R$ and $T$, such that for stepsizes $\tau\le\overline\tau$ and perturbations satisfying
 $$
 \| u_0- u_0^\star\|^2 +\tau\sum_{0\le n\tau\le T} \| d_{n+1/2} \|^2  \le \delta^2,
 $$
 the error satisfies, for $0\le n\tau \le T$,
\begin{align*}
\| u_n- u_n^\star \|^2 &\le C_Y  \Bigl(  \| u_0- u_0^\star\|^2 +\tau\sum_{k=0}^{n-1} \| d_{k+1/2} \|^2 \Bigr),
\\
| u_n - u_n^\star |^2 &\le C_X  \Bigl(  | u_0- u_0^\star |^2 +\tau\sum_{k=0}^{n-1} | d_{k+1/2} |^2 \Bigr) ,
\end{align*}
where $C_Y$  depends only on $K$, $R$ and $T$, and $C_X$ depends only on $R$
and~$T$.
\end{lemma}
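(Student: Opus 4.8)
The plan is to transcribe the proof of the continuous perturbation result, Lemma~\ref{lem:pert-cont}, to the discrete level: I would derive two energy estimates — one in the $Y$-norm, obtained through the isometry $S$ and the commutator condition \eqref{S}, and one in the $X$-norm — and then close them with a discrete Gronwall inequality together with a bootstrap argument that keeps the iterates in a fixed ball so that the $R$-dependent constants of (K1)--(K4) remain available.

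First I would subtract \eqref{mpr-star} from \eqref{mpr} and write, for the error $e_n=u_n-u_n^\star$ with the abbreviations $e_{n+1/2}=\tfrac12(e_{n+1}+e_n)$ and $\dot e_{n+1/2}=\tau^{-1}(e_{n+1}-e_n)$ as in \eqref{u-abbr}, the discrete error equation
$$\dot e_{n+1/2} + A(\widehat u_{n+1/2})e_{n+1/2} = -\bigl(A(\widehat u_{n+1/2})-A(\widehat u_{n+1/2}^\star)\bigr)u_{n+1/2}^\star + \bigl(f(\widehat u_{n+1/2})-f(\widehat u_{n+1/2}^\star)\bigr) - d_{n+1/2}.$$
The feature that is genuinely new compared with the continuous case is that the linearization argument is an extrapolation of past values: $\widehat u_{n+1/2}-\widehat u_{n+1/2}^\star = \tfrac32 e_n - \tfrac12 e_{n-1}$ (and $=e_0$ in the first step, where $\widehat u_{1/2}=u_0$), so the Lipschitz conditions \eqref{Lip-A}, \eqref{Lip-B}, \eqref{Lip-f-X}, \eqref{Lip-f-Y} will bring in both $e_n$ and $e_{n-1}$ and the energy inequalities will be three-term recursions rather than one-step ones.

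For the $Y$-estimate I would, exactly as in Lemma~\ref{lem:pert-cont}, rewrite $A(y)=S^{-1}A(y)S + S^{-1}B(y)S$ using \eqref{S}, apply $S$ to the error equation, and take the inner product with $Se_{n+1/2}$. The crucial ingredient is the discrete analogue of $\tfrac12\tfrac{\d}{\d t}\|e\|^2$, namely the identity $(Se_{n+1/2},S\dot e_{n+1/2}) = \tfrac{1}{2\tau}(\|e_{n+1}\|^2-\|e_n\|^2)$, which is what renders the midpoint rule dissipative. Dropping the nonnegative term $(Se_{n+1/2},A(\widehat u_{n+1/2})Se_{n+1/2})\ge 0$ from \eqref{accretive}, bounding the rest by \eqref{B-bound}--\eqref{Lip-f-Y} together with $\|Su_{n+1/2}^\star\|\le K$ and $|Su_{n+1/2}^\star|=\|u_{n+1/2}^\star\|\le R$, and using $|e_k|\le\|e_k\|$ to turn the $X$-norm coming from \eqref{Lip-A} into a $Y$-norm, I expect to reach
$$\|e_{n+1}\|^2 \le \|e_n\|^2 + C\tau\bigl(\|e_{n+1}\|^2+\|e_n\|^2+\|e_{n-1}\|^2\bigr) + \tau\|d_{n+1/2}\|^2,$$
with $C$ depending on $K$ and $R$. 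The $X$-estimate is obtained more directly by testing the error equation itself with $e_{n+1/2}$ and using \eqref{accretive}, \eqref{Lip-A}, \eqref{Lip-f-X}; it closes entirely in the $X$-norm and hence produces a constant depending only on $R$ and $T$, as claimed.

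The remaining work is routine but must be organized with care. For small $\tau$ I would absorb $C\tau\|e_{n+1}\|^2$ on the left, reduce the three-term recursion to a one-step recursion for $\|e_n\|^2+\|e_{n-1}\|^2$, and invoke the discrete Gronwall inequality to obtain the stated $Y$- and $X$-bounds. The main obstacle is the bootstrap that legitimizes the $R$-dependent constants: those local bounds apply only while the iterates and their extrapolations $\widehat u_{n+1/2}$ stay in a fixed ball, so I would run the estimate as an induction, choosing $\delta$ and $\overline\tau$ so small that the Gronwall constant forces $\|e_n\|\le R$, hence $\|u_n\|\le 2R$ and $\|\widehat u_{n+1/2}\|\le 4R$, at every step --- the discrete counterpart of the continuous ``choose $\delta$ so that $C_Y\delta\le R$''. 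A technical point silent in the statement but needed to apply accretivity to $Se_{n+1/2}$ is that $Su_n$ lies in $D(A)$; this propagates from the regularity of the starting value by the same inversion of $\tau^{-1}I + A(\widehat u_{n+1/2}) + B(\widehat u_{n+1/2})$ used in Lemma~\ref{lem:Y-bound-li}, whose inverse maps $X$ into $D(A)$.
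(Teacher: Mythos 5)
Your proposal follows essentially the same route as the paper's proof: the same discrete error equation, the same conjugation by $S$ via \eqref{S}, the same test function $Se_{n+1/2}$ with the telescoping identity $(Se_{n+1/2},S\dot e_{n+1/2})=\tfrac1{2\tau}(\|e_{n+1}\|^2-\|e_n\|^2)$, the resulting three-term recursion closed by a discrete Gronwall inequality and the bootstrap $C_Y\delta\le R$, and the separate $X$-norm estimate by testing \eqref{err-eq-li} with $e_{n+1/2}$. The proposal is correct; your closing remarks on the extrapolated ball radius and on $Se_{n+1/2}$ lying in the domain of $A(\widehat u_{n+1/2})$ are points the paper passes over silently.
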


\begin{proof} The proof transfers the arguments of the proof of Lemma~\ref{lem:pert-cont} to the discrete case.
With the error $e_n=u_n-u_n^\star$ we associate the abbreviations (cf.\,\eqref{u-abbr})
$$
e_{n+1/2} = \frac{e_{n+1} + e_n}2, \quad\  \dot e_{n+1/2} = \frac{e_{n+1} - e_n}\tau, \quad\
\widehat e_{n+1/2} = e_n + \tfrac12(e_n-e_{n-1}).
$$
We have the error equation
\begin{align}\nonumber
\dot e_{n+1/2} + A(\widehat u_{n+1/2})e_{n+1/2} = &- \bigl( A(\widehat u_{n+1/2})-A(\widehat u_{n+1/2}^\star) \bigr) u_{n+1/2}^\star
\\& + \bigl(f(\widehat u_{n+1/2})-f(\widehat u_{n+1/2}^\star)\bigr) -d_{n+1/2}.
\label{err-eq-li}
\end{align}
Using \eqref{S} as $A(y)=S^{-1}A(y)S + S^{-1}B(y)S$ and  applying the operator $S$ on both sides we thus have
\begin{align*}
&S\dot e_{n+1/2} + A(\widehat u_{n+1/2})Se_{n+1/2} + B(\widehat u_{n+1/2})Se_{n+1/2}
\\&
=  \bigl( A(\widehat u_{n+1/2})-A(\widehat u_{n+1/2}^\star) \bigr) Su_{n+1/2}^\star +
\bigl( B(\widehat u_{n+1/2})-B(\widehat u_{n+1/2}^\star) \bigr) Su_{n+1/2}^\star
\\
&\quad+ \bigl(Sf(\widehat u_{n+1/2})-Sf(\widehat u_{n+1/2}^\star)\bigr) - Sd_{n+1/2}.
\end{align*}
Using the accretivity \eqref{accretive} and the bounds \eqref{B-bound}--\eqref{Lip-f-Y} and recalling that $S$ is an isometry between $Y$ and $X$, we therefore obtain, as long as $\| u_n \|\le 2R$,
\begin{align*}
&\tfrac1{2\tau}\bigl( \| e_{n+1} \|^2 - \| e_n \|^2 \bigr) = \tfrac1{2\tau}\bigl( | Se_{n+1} |^2 - | Se_n |^2 \bigr) = (Se_{n+1/2},S\dot e_{n-1/2})
\\
&\le M_{2R}^B \|e_{n+1/2}\|^2
 \\
&\quad+ L_{2R}^A \|e_{n+1/2}\|\,|\widehat e_{n+1/2}|\, \| Su_{n+1/2}^\star\|
 +
L_{2R}^B \|e_{n+1/2}\|\,\|\widehat e_{n+1/2}\|\, | Su_{n+1/2}^\star|
 \\
&\quad+
\ell_{2R}^Y \|e_{n+1/2}\|\, \|\widehat e_{n+1/2}\|
+ \|e_{n+1/2}\|\, \|d_{n+1/2}\|.
\end{align*}
The right-hand side is bounded by $C_{K,R} (\| e_{n+1} \|^2 + \| e_n \|^2 + \|e_{n-1}\|^2) + \|d_{n+1/2}\|^2$
and the stated error bound in the $Y$-norm then follows on summing up and using a discrete Gronwall inequality. Choosing $\delta$ such that $C_Y\delta\le R$, the condition $\| u_n \|\le 2R$ then remains satisfied for $0\le n\tau \le T$. Taking in \eqref{err-eq-li} the inner product with $e_{n+1/2}$ and using  \eqref{accretive}, \eqref{Lip-A}, \eqref{Lip-f-X} gives us
$$
\tfrac1{2\tau}\bigl( | e_{n+1} |^2 - | e_n |^2 \bigr) = (e_{n+1/2},\dot e_{n+1/2}) \le
C_R (| e_{n+1} |^2 + | e_n |^2 + |e_{n-1}|^2) + |d_{n+1/2}|^2,
$$
and finally a discrete Gronwall inequality yields the stated error bound in the $X$-norm.
\qed
\end{proof}

\subsection{Existence and stability for the fully implicit midpoint rule}

\begin{lemma}\label{lem:Y-bound-fi} The statement of Lemma~\ref{lem:Y-bound-li} is also valid for the fully implicit midpoint rule, \eqref{mpr} with (FI).
\end{lemma}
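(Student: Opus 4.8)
The only genuinely new point relative to Lemma~\ref{lem:Y-bound-li} is the \emph{existence and uniqueness} of the solution: since in (FI) we have $\widehat u_{n+1/2}=u_{n+1/2}$, the defining equation \eqref{mpr} is now nonlinear in $u_{n+1}$. Once a solution $u_{n+1}\in Y$ with $\|u_{n+1/2}\|\le 2R$ is known to exist, the bound on $\|u_{n+1}\|$ follows \emph{verbatim} from the energy estimate in the proof of Lemma~\ref{lem:Y-bound-li}, because the only property of the argument $\widehat u_{n+1/2}$ that entered there was $\|\widehat u_{n+1/2}\|\le 2R$, which here reads $\|u_{n+1/2}\|\le 2R$. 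Writing the method for the unknown average $u_{n+1/2}=\tfrac12(u_{n+1}+u_n)$, so that $u_{n+1}=2u_{n+1/2}-u_n$, equation \eqref{mpr} with (FI) is equivalent to
$$
\Bigl(\tfrac{2}{\tau} I + A(u_{n+1/2})\Bigr)u_{n+1/2} = \tfrac{2}{\tau}\,u_n + f(u_{n+1/2}).
$$
The plan is to solve this by the fixed-point map $\Phi$ that sends $v\in Y$ to $w=\Phi(v)\in Y$, the solution of the \emph{linear} equation $(\tfrac{2}{\tau} I + A(v))w = \tfrac{2}{\tau}u_n + f(v)$; a fixed point of $\Phi$ is precisely $u_{n+1/2}$. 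For $\|v\|\le 2R$ this map is well defined exactly as in Lemma~\ref{lem:Y-bound-li}: applying $S$ and using \eqref{S} turns the equation into $(\tfrac{2}{\tau} I + A(v)+B(v))\,Sw = \tfrac{2}{\tau}Su_n + Sf(v)$ in $X$, whose operator is invertible for $\tau$ small by the m-accretivity (K1) and the bound \eqref{B-bound}, and this yields $w\in Y$.

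First I would show that $\Phi$ maps the closed ball $\calB=\{v\in Y:\|v\|\le 2R\}$ into itself. This is the energy estimate of Lemma~\ref{lem:Y-bound-li} applied to the linear solve: testing the $S$-transformed equation with $Sw$ and using \eqref{accretive}, \eqref{B-bound} and $|Sf(v)|=\|f(v)\|\le\ell_{2R}^Y\,2R+\|f(0)\|=:c_{2R}$ gives $\|w\|\le(\|u_n\|+\tfrac\tau2 c_{2R})/(1-\tfrac\tau2 M_{2R}^B)$, which is $\le R+C_R\tau\le 2R$ for $\tau\le\tau_R$, since $\|u_n\|\le R$. Next I would show that $\Phi$ is a contraction in the \emph{weaker} $X$-norm. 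For $v_1,v_2\in\calB$ with images $w_i=\Phi(v_i)$, subtracting the two linear equations gives
$$
\tfrac{2}{\tau}(w_1-w_2) + A(v_1)(w_1-w_2) = -\bigl(A(v_1)-A(v_2)\bigr)w_2 + \bigl(f(v_1)-f(v_2)\bigr),
$$
and testing with $w_1-w_2$ in $X$, using \eqref{accretive}, the Lipschitz bound \eqref{Lip-A}, the self-map bound $\|w_2\|\le 2R$, and \eqref{Lip-f-X}, yields $|w_1-w_2|\le\tfrac\tau2(L_{2R}^A\,2R+\ell_{2R}^X)\,|v_1-v_2|$, a contraction for $\tau$ sufficiently small.

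Finally I would invoke the contraction mapping principle, the one subtle point being that the $X$-contraction lives on a set whose boundedness is measured in the stronger $Y$-norm. Here I use that a closed bounded ball of the Hilbert space $Y$ is complete for the metric induced by $X$: an $X$-Cauchy sequence in $\calB$ converges in $X$, while its boundedness in $Y$ forces a weakly $Y$-convergent subsequence whose (weak $Y$, hence weak $X$) limit must coincide with the strong $X$-limit and therefore lies in $\calB$. On this complete metric space $\Phi$ is a contraction, so it has a unique fixed point $u_{n+1/2}\in\calB$, giving the unique $u_{n+1}=2u_{n+1/2}-u_n\in Y$; uniqueness among all $Y$-solutions with $\|u_{n+1/2}\|\le 2R$ follows directly from \eqref{accretive}, \eqref{Lip-A} and \eqref{Lip-f-X} by the same subtraction argument. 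With existence and the bound $\|u_{n+1/2}\|\le 2R$ in hand, the stability estimate $\|u_{n+1}\|\le(1+C_R\tau)\|u_n\|+\tau\gamma_R$ is copied from Lemma~\ref{lem:Y-bound-li}. I expect the main point requiring care to be exactly this completeness-in-the-weaker-norm step, reconciling the $Y$-bound needed for the constants with the $X$-metric needed for the contraction.
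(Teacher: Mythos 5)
Your proposal is correct and follows essentially the same route as the paper: the same linearized fixed-point map $\Phi$, the same self-map bound on the ball $\{v\in Y:\|v\|\le 2R\}$ via the energy estimate of Lemma~\ref{lem:Y-bound-li}, the same $X$-norm contraction estimate via \eqref{accretive}, \eqref{Lip-A}, \eqref{Lip-f-X}, and the same key observation that the $Y$-ball is complete in the $X$-metric (the paper closes this point by a duality/density argument citing \cite[Lemma 7.3]{Kato75}, you by weak compactness and weak lower semicontinuity of the $Y$-norm --- both valid and essentially equivalent).
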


\begin{proof} The proof transfers the existence proof for \eqref{Q} in \cite{Kato75} to the time discretization. We consider the fixed-point iteration, with starting iterate $u_{n+1/2}^{(0)}=u_n$,
$$
\frac{u_{n+1/2}^{(k+1)} - u_n}{\tau/2} + A(u_{n+1/2}^{(k)}) u_{n+1/2}^{(k+1)} = f(u_{n+1/2}^{(k)}).
$$
If this iteration converges to a limit $u_{n+1/2}$, then $u_{n+1}=2u_{n+1/2}-u_n$ solves \eqref{mpr} with (FI). We write the above iteration briefly as
$$
u_{n+1/2}^{(k+1)} = \Phi(u_{n+1/2}^{(k)}) .
$$
Let $B_{2R}:=\{ y\in Y\,:\, \| y \|\le 2R \}$, which is a closed set in $X$, as is stated (without proof) in \cite[Lemma 7.3]{Kato75}. [This follows from a duality and density argument: for $y\in Y$, $\| y\| = \sup_{0\ne v\in X} (y,v)/\|v\|_*$, where $\|\cdot\|_*$ is the norm on the dual $Y'$ and we use the Gelfand triple $Y\subset X \subset Y'$ with dense and continuous embeddings. With this formula for $\|y\|$ it follows that for every sequence $(y_n)$ in $B_{2R}$ that converges to $x\in X$ in the $X$-norm, also $x\in B_{2R}$.] Therefore, $B_{2R}$ is a complete metric space with the metric $d(v,w)=|v-w|$.

By the argument of the proof of Lemma~\ref{lem:Y-bound-li} we find that for all $v\in B_{2R}$,
$$
\| \Phi(v) \| \le (1+C_R\tau) \| u_n \| + \tau \gamma_R \le 2R
$$
for sufficiently small stepsize $\tau\le \tau_R$. Hence, $\Phi$ maps $B_{2R}$ into itself.

We now show that $\Phi$ is a contraction on $B_{2R}$ for sufficiently small~$\tau$.
For $v,\widetilde v\in B_{2R}$, let $w=\Phi(v)$ and $\widetilde w=\Phi(\widetilde v)$. Then,
$$
\frac{\widetilde w-w}{\tau/2} + A(\widetilde v)(\widetilde w-w) = -\bigl( A(\widetilde v)-A(v)\bigr) w + f(\widetilde v)-f(v).
$$
Taking the inner product with $\widetilde w-w$ and using conditions (K1)--(K4), we obtain
$$
\frac2\tau |\widetilde w-w|^2 \le |\widetilde w-w| \, ( L_{2R}^A \cdot 2R + \ell_{2R}^X) |\widetilde v-v|
$$
and hence
$$
|\Phi(\widetilde v)-\Phi(v)|=|\widetilde w-w| \le c_R\tau |\widetilde v-v|.
$$
Therefore, if $c_R\tau<1$, then $\Phi$ is a contraction on $B_{2R}$, and the result follows with the
Banach fixed-point theorem.
\qed
\end{proof}

\begin{lemma}\label{lem:pert-fi} The statement of Lemma~\ref{lem:pert-li} is also valid for the fully implicit midpoint rule, \eqref{mpr} with (FI).
\end{lemma}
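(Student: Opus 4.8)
The plan is to mirror the proof of Lemma~\ref{lem:pert-li} as closely as possible, since Lemma~\ref{lem:pert-fi} asserts the identical conclusion for the fully implicit rule. The only structural difference lies in how $\widehat u_{n+1/2}$ is formed: for (FI) we have $\widehat u_{n+1/2}=\frac12(u_{n+1}+u_n)=u_{n+1/2}$, whereas for (LI) it is the extrapolation $\frac32 u_n-\frac12 u_{n-1}$. The analytical skeleton --- forming the error equation, conjugating by $S$, applying $S$ to both sides, testing with $Se_{n+1/2}$ in $X$, invoking accretivity \eqref{accretive} together with the bounds \eqref{B-bound}--\eqref{Lip-f-Y}, summing, and closing with a discrete Gronwall inequality --- transfers verbatim.

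First I would record the error equation for the (FI) case. With $e_n=u_n-u_n^\star$ and the abbreviations $e_{n+1/2}=\frac12(e_{n+1}+e_n)$, $\dot e_{n+1/2}=\tau^{-1}(e_{n+1}-e_n)$, subtracting the defect equation \eqref{mpr-star} from \eqref{mpr} (both with (FI)) gives
\begin{equation*}
\dot e_{n+1/2} + A(u_{n+1/2})e_{n+1/2} = -\bigl(A(u_{n+1/2})-A(u_{n+1/2}^\star)\bigr)u_{n+1/2}^\star + \bigl(f(u_{n+1/2})-f(u_{n+1/2}^\star)\bigr) - d_{n+1/2}.
\end{equation*}
This is exactly \eqref{err-eq-li} with $\widehat u_{n+1/2}$ replaced by $u_{n+1/2}$ and $\widehat e_{n+1/2}$ replaced by $e_{n+1/2}$. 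I would then apply the conjugation $A(y)=S^{-1}A(y)S+S^{-1}B(y)S$ and the operator $S$, testing the resulting identity with $Se_{n+1/2}$. Since $S$ is an isometry from $Y$ to $X$, the left-hand side produces $\frac1{2\tau}(\|e_{n+1}\|^2-\|e_n\|^2)$, and accretivity kills the $A(u_{n+1/2})Se_{n+1/2}$ term. The Lipschitz arguments of the evaluation terms are now $u_{n+1/2}$ rather than the extrapolate, so the difference terms $e_{n+1/2}$ appear in place of $\widehat e_{n+1/2}$, bounded by $\frac12(\|e_{n+1}\|+\|e_n\|)$.

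The key observation is that the right-hand side is bounded by $C_{K,R}(\|e_{n+1}\|^2+\|e_n\|^2)+\|d_{n+1/2}\|^2$, which is in fact simpler than in Lemma~\ref{lem:pert-li} because no $\|e_{n-1}\|$ term arises. Summing over steps and applying a discrete Gronwall inequality (absorbing the $\|e_{n+1}\|^2$ term on the left once $C_{K,R}\tau<\frac12$, which fixes $\overline\tau$) yields the $Y$-norm error bound; choosing $\delta$ so that $C_Y\delta\le R$ keeps $\|u_n\|\le 2R$ inductively. The $X$-norm bound follows identically by testing the error equation directly with $e_{n+1/2}$ and using \eqref{accretive}, \eqref{Lip-A}, \eqref{Lip-f-X}. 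I expect no genuine obstacle: the main point to verify carefully is that the existence and well-posedness of each fully implicit step is already guaranteed by Lemma~\ref{lem:Y-bound-fi}, so that $u_{n+1}$ is defined and the testing procedure is legitimate; the estimates themselves are if anything easier than the linearly implicit case.
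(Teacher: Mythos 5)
Your proposal is correct and matches the paper's approach: the paper's proof of this lemma is literally the one-line remark that, with the existence result of Lemma~\ref{lem:Y-bound-fi} in hand, the argument of Lemma~\ref{lem:pert-li} carries over, which is exactly the transfer you carry out (correctly noting that the only change is $\widehat u_{n+1/2}=u_{n+1/2}$, so the $\|e_{n-1}\|$ term disappears and the estimate is, if anything, simpler).
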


\begin{proof} With Lemma~\ref{lem:Y-bound-fi} at hand, the result follows with the proof of Lemma~\ref{lem:pert-li}.
\qed
\end{proof}

\subsection{Consistency error}
We now choose the exact solution values $u_n^\star=u(t_n)$ in \eqref{mpr-star}, with
$\widehat u_{n+1/2}^\star = u_n^\star + \tfrac12 (u_n^\star-u_{n-1}^\star)$ in the case of the linearly implicit midpoint rule (except for $n=0$, where $\widehat u_{1/2}^\star=u(0)$), and with $\widehat u_{n+1/2}^\star = u_{n+1/2}^\star = \tfrac12 (u_{n+1}^\star+u_n^\star)$ in the case of the fully implicit midpoint rule. The defects $d_{n+1/2}$ in \eqref{mpr-star} are then the consistency errors and are bounded as follows.

\begin{lemma}\label{lem:cons-error-mpr}
Suppose that the exact solution $u$ of \eqref{Q} has the regularity $u\in C^3([0,T],Y)$ with $Su\in C^2([0,T],Y)$. Then, the consistency errors \eqref{mpr-star} of the linearly and fully implicit midpoint rule are bounded by
$$
\| d_{n+1/2} \| \le C\tau^2,
$$
where $C$ is independent of $n$ and $\tau$ with $0\le n\tau\le T-\tau$ (except for $n=0$ for the linearly implicit method, where $\| d_{1/2} \| \le C\tau$).
\end{lemma}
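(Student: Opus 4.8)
The plan is to represent the defect as the residual obtained when the exact solution is inserted into the scheme, and to extract the order by Taylor expansion about the midpoint time $t_{n+1/2}=(n+\tfrac12)\tau$, exploiting that both the difference quotient and the averaging in \eqref{mpr-star} are symmetric about $t_{n+1/2}$. Concretely, I would subtract from \eqref{mpr-star} the differential equation \eqref{Q} evaluated at $t_{n+1/2}$, that is $\dot u(t_{n+1/2})+A(u(t_{n+1/2}))u(t_{n+1/2})-f(u(t_{n+1/2}))=0$, to obtain
\[
d_{n+1/2}
= \Bigl(\tfrac{u(t_{n+1})-u(t_n)}{\tau} - \dot u(t_{n+1/2})\Bigr)
+ \Bigl(A(\widehat u_{n+1/2}^\star)\,\tfrac{u(t_{n+1})+u(t_n)}{2} - A(u(t_{n+1/2}))\,u(t_{n+1/2})\Bigr)
- \bigl(f(\widehat u_{n+1/2}^\star)-f(u(t_{n+1/2}))\bigr).
\]
The three brackets are then estimated separately in the $Y$-norm. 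Since $u\in C([0,T];Y)$ has bounded range, all arguments of $A$ and $f$ lie in a fixed ball $\|\cdot\|\le R'$ for $\tau$ small enough, so that the constants from (K1)--(K4) are uniform in $n$ and $\tau$.

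For the first bracket, a symmetric Taylor expansion of $u$ about $t_{n+1/2}$ in the $Y$-norm cancels the odd-order terms and leaves an $O(\tau^2)$ remainder controlled by $\sup_t\|\dddot u(t)\|$; this is exactly where the hypothesis $u\in C^3([0,T];Y)$ enters. The same expansion gives $\|\tfrac12(u(t_{n+1})+u(t_n))-u(t_{n+1/2})\|=O(\tau^2)$ and $\|\widehat u_{n+1/2}^\star-u(t_{n+1/2})\|=O(\tau^2)$ (for (FI) the extrapolant is the average itself, for (LI) it is the linear extrapolation to $t_{n+1/2}$, both second-order accurate), requiring only $u\in C^2$. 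With the latter estimate, the third bracket is $O(\tau^2)$ at once from the $Y$-Lipschitz bound \eqref{Lip-f-Y}.

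The main obstacle is the middle bracket, because it must be bounded in the $Y$-norm, whereas \eqref{A-bound} and \eqref{Lip-A} only yield $X$-norm bounds on the output of $A$. I would resolve this exactly as in the proof of Lemma~\ref{lem:pert-cont}: writing $\|z\|=|Sz|$ and using Kato's commutator \eqref{S} in the form $SA(y)=(A(y)+B(y))S$. After splitting the bracket as
\[
A(\widehat u_{n+1/2}^\star)\bigl(\tfrac12(u(t_{n+1})+u(t_n))-u(t_{n+1/2})\bigr)
+\bigl(A(\widehat u_{n+1/2}^\star)-A(u(t_{n+1/2}))\bigr)u(t_{n+1/2}),
\]
applying $S$ and commuting it through, the task reduces to $X$-norm estimates of $A$, $B$ and their Lipschitz differences \eqref{B-bound}, \eqref{Lip-A}, \eqref{Lip-B} applied to the $S$-transformed quantities $S\bigl(\tfrac12(u(t_{n+1})+u(t_n))-u(t_{n+1/2})\bigr)$ and $Su(t_{n+1/2})$. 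This is precisely where $Su\in C^2([0,T];Y)$ is needed: it makes the first of these $O(\tau^2)$ in $Y$ (again a symmetric expansion, now of $Su$) while keeping $Su(t_{n+1/2})$ bounded in $Y$, so that both resulting pieces come out $O(\tau^2)$.

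Finally, the first step of the linearly implicit rule is exceptional because $\widehat u_{1/2}^\star=u(0)$ is only a constant (rather than linear) extrapolation to $t_{1/2}$, so that $\|\widehat u_{1/2}^\star-u(t_{1/2})\|=O(\tau)$. The difference-quotient bracket is still $O(\tau^2)$, but the second and third brackets then drop to $O(\tau)$, which gives the stated bound $\|d_{1/2}\|\le C\tau$.
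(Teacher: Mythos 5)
Your proposal is correct and follows essentially the same route as the paper's proof: the same three-term decomposition of the defect around $t_{n+1/2}$, the same splitting of the operator bracket into $A(\widehat u^\star)(u^\star_{n+1/2}-u(t_{n+1/2}))$ plus a Lipschitz difference applied to $u(t_{n+1/2})$, and the same use of the commutator \eqref{S} to convert the $Y$-norm bound into $X$-norm bounds on $S$-transformed quantities, with the regularity hypotheses entering exactly where you say they do.
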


\begin{proof}
First we note that Taylor expansion of $u$ at $t_{n+1/2}= (n+1/2)\tau$ yields
\begin{align*}
\| \widehat u_{n+1/2}^\star - u(t_{n+1/2}) \| &\le c \max_{t_{n}\le t \le t_{n+1}} \| \ddot u(t) \| \cdot \tau^2 ,
\\
\| S u_{n+1/2}^\star - Su(t_{n+1/2}) \| &\le c \max_{t_{n-1}\le t \le t_{n+1}} \| S\ddot u(t) \| \cdot \tau^2 ,
\\
\Bigl\| \frac{u(t_{n+1})- u(t_n)}\tau - \dot u(t_{n+1/2}) \Bigr\| &\le c  \max_{t_{n}\le t \le t_{n+1}} \| \dddot u(t) \| \cdot \tau^2.
\end{align*}
We denote
\begin{align*}
&r_{n+1/2} := A(\widehat u_{n+1/2}^\star) u_{n+1/2}^\star - A(u(t_{n+1/2}))  u(t_{n+1/2})
\\
&\ = A(\widehat u_{n+1/2}^\star) \bigl(u_{n+1/2}^\star - u(t_{n+1/2})\bigr) +
\bigl(A(\widehat u_{n+1/2}^\star) - A(u(t_{n+1/2})) \bigr) u(t_{n+1/2}) .
\end{align*}
Using \eqref{S}, this becomes
\begin{align*}
r_{n+1/2} &= S^{-1} A(\widehat u_{n+1/2}^\star) S\bigl(u_{n+1/2}^\star - u(t_{n+1/2})\bigr)
\\
&\quad +
S^{-1} B(\widehat u_{n+1/2}^\star) S\bigl(u_{n+1/2}^\star - u(t_{n+1/2})\bigr)
\\
&\quad + S^{-1}\bigl(A(\widehat u_{n+1/2}^\star) - A(u(t_{n+1/2})) \bigr) Su(t_{n+1/2})
\\
&\quad +
S^{-1}\bigl(B(\widehat u_{n+1/2}^\star) - B(u(t_{n+1/2})) \bigr) Su(t_{n+1/2}).
\end{align*}
By conditions (K1)--(K3) with $R=\max _{0\le t \le T} \| u(t) \|$, this is bounded by
\begin{align*}
\| r_{n+1/2} \| &\le M_R^A \| S u_{n+1/2}^\star - Su(t_{n+1/2}) \| + M_R^B | S u_{n+1/2}^\star - Su(t_{n+1/2}) |
\\
&\quad + L_R^A |\widehat u_{n+1/2}^\star - u(t_{n+1/2})|\, \| Su(t_{n+1/2}) \|
\\
&\quad + L_R^B \|\widehat u_{n+1/2}^\star - u(t_{n+1/2})\|\, | Su(t_{n+1/2}) |.
\end{align*}
Moreover, $
\| f(\widehat u_{n+1/2}^\star) - f(u(t_{n+1/2})) \| \le \ell_R^Y  \|\widehat u_{n+1/2}^\star - u(t_{n+1/2})\|.
$
Since
\begin{align*}
d_{n+1/2} &= \Bigl(\frac{u(t_{n+1})- u(t_n)}\tau - \dot u(t_{n+1/2}) \Bigr)- r_{n+1/2}
\\
&\quad - \bigl(f(\widehat u_{n+1/2}^\star) - f(u(t_{n+1/2}))\bigr),
\end{align*}
the result follows with the above estimates.
\qed
\end{proof}

\subsection{Error bounds}

Combining the lemmas of this section, we obtain the following error bound.

\begin{theorem}\label{thm:conv-mpr} Let the conditions (K1)--(K4) be satisfied, and
suppose that the solution $u$ of \eqref{Q} has the regularity $u\in C^3([0,T],Y)$ with $Su\in C^2([0,T],Y)$. Then, there exists $\bar\tau>0$ such that for stepsizes $0<\tau\le\bar\tau$, the errors of the fully and linearly implicit midpoint rules \eqref{mpr} with (FI) and (LI), respectively, are bounded by
$$
\| u_n -u(t_n) \| \le C \tau^2,
$$
where $C$ is independent of $n$ and $\tau$ with $0\le n\tau\le T$.
\end{theorem}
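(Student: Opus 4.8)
The plan is to apply the stability estimates of Lemmas~\ref{lem:pert-li} and~\ref{lem:pert-fi} with $u_n^\star = u(t_n)$ and to insert the consistency bounds of Lemma~\ref{lem:cons-error-mpr}. The regularity $u\in C^3([0,T],Y)$ with $Su\in C^2([0,T],Y)$ guarantees that $R:=\max_{0\le t\le T}\|u(t)\|$ and $K:=\max_{0\le t\le T}\|Su(t)\|$ are finite, so the exact-solution values satisfy the hypotheses $\|u_n^\star\|\le R$ and $\|Su_n^\star\|\le K$ of the perturbation lemmas. Since the schemes start from $u_0=u(0)$, the initial error vanishes, $e_0:=u_0-u(t_0)=0$.

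For the fully implicit midpoint rule the conclusion is immediate. By Lemma~\ref{lem:cons-error-mpr} all defects obey $\|d_{n+1/2}\|\le C\tau^2$, so
$$
\tau\sum_{0\le n\tau\le T}\|d_{n+1/2}\|^2 \le T\,C^2\tau^4 ,
$$
which is $\le\delta^2$ once $\tau$ is small enough. Lemma~\ref{lem:pert-fi} then yields, using $e_0=0$, the bound $\|e_n\|^2\le C_Y\,T\,C^2\tau^4$, i.e.\ $\|u_n-u(t_n)\|\le C\tau^2$.

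For the linearly implicit rule one extra step is needed, because Lemma~\ref{lem:cons-error-mpr} gives only $\|d_{1/2}\|\le C\tau$ in the first step; inserting this $O(\tau)$ defect into the weighted sum $\tau\sum_k\|d_{k+1/2}\|^2$ would contribute a term $\tau\cdot\tau^2=\tau^3$ and spoil the order. I would therefore estimate the first step separately. Since $\widehat u_{1/2}=u_0=u(0)=\widehat u_{1/2}^\star$ and $e_0=0$, the two differences on the right-hand side of the error equation~\eqref{err-eq-li} vanish and it reduces to $\dot e_{1/2}+A(u_0)e_{1/2}=-d_{1/2}$ with $e_{1/2}=\tfrac12 e_1$. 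Applying $S$, taking the inner product with $Se_{1/2}$, and using accretivity~\eqref{accretive} and~\eqref{B-bound} exactly as in the proof of Lemma~\ref{lem:pert-li} gives
$$
\tfrac1{2\tau}\|e_1\|^2 \le \tfrac14 M_{2R}^B\|e_1\|^2 + \tfrac12\|e_1\|\,\|d_{1/2}\| ,
$$
so that for small $\tau$ we obtain $\|e_1\|\le C\tau\|d_{1/2}\|\le C'\tau^2$: a single step turns the $O(\tau)$ defect into an $O(\tau^2)$ error.

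With $e_0=0$ and $\|e_1\|\le C'\tau^2$ both controlled, I would then run the discrete energy/Gronwall argument of Lemma~\ref{lem:pert-li} over the steps $n\ge1$, for which $\|d_{n+1/2}\|\le C\tau^2$ and hence $\tau\sum_{k\ge1}\|d_{k+1/2}\|^2=O(\tau^4)$. This produces $\|e_n\|^2\le C_Y\bigl(\|e_1\|^2+\|e_0\|^2+\tau\sum_{k\ge1}\|d_{k+1/2}\|^2\bigr)=O(\tau^4)$ and therefore $\|u_n-u(t_n)\|\le C\tau^2$. I expect the main obstacle to be exactly this first-step anomaly of the linearly implicit scheme: the point to get right is that the scheme, applied once with $e_0=0$ and an exact linearization point, gains a factor $\tau$ and converts the $O(\tau)$ local defect into an $O(\tau^2)$ error. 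A minor technical nuisance is that, (LI) being a two-step recursion in $u_{n-1}$, restarting the Gronwall estimate at $n=1$ must carry both $e_0$ and $e_1$, and the continuation argument keeping $\|u_n\|\le 2R$ (needed so that the $R$-dependent constants apply) has to be bootstrapped as in Lemma~\ref{lem:pert-li}.
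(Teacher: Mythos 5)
Your proposal is correct and follows the route the paper intends: the paper gives no written proof of Theorem~\ref{thm:conv-mpr} beyond the remark that it follows by ``combining the lemmas of this section'', i.e.\ by inserting $u_n^\star=u(t_n)$ and the consistency bounds of Lemma~\ref{lem:cons-error-mpr} into the perturbation Lemmas~\ref{lem:pert-li} and~\ref{lem:pert-fi}. The one place where you go beyond the paper is the separate treatment of the first step of the linearly implicit scheme, and this is in fact a necessary refinement rather than an optional one: since Lemma~\ref{lem:cons-error-mpr} only gives $\|d_{1/2}\|\le C\tau$, feeding this directly into the $\ell^2$-type quantity $\tau\sum_k\|d_{k+1/2}\|^2$ of Lemma~\ref{lem:pert-li} yields only $O(\tau^3)$, hence $\|e_n\|\le C\tau^{3/2}$. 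Your observation that for $n=0$ one has $\widehat u_{1/2}=u(0)=\widehat u_{1/2}^\star$ and $e_0=0$, so that the error equation \eqref{err-eq-li} degenerates to $\dot e_{1/2}+A(u_0)e_{1/2}=-d_{1/2}$ and a single energy step gives $\|e_1\|\le C\tau\|d_{1/2}\|\le C'\tau^2$, is exactly the right repair, and restarting the discrete Gronwall argument at $n=1$ (carrying both $e_0=0$ and $e_1$, as the two-step structure of (LI) requires) then delivers the stated $O(\tau^2)$ bound. For the fully implicit rule your direct application of Lemmas~\ref{lem:pert-fi} and~\ref{lem:cons-error-mpr} is complete as written.
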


\section{Implicit Runge--Kutta methods}

\subsection{Method formulation and properties}
For a given stepsize $\tau>0$, an $m$-stage implicit Runge--Kutta method applied to the quasi-linear equation \eqref{Q} determines solution approximations $u_n\approx u(t_n)$ and internal stages $U_{ni}$ by the equations\footnote{Here the dot is just a suggestive notation, not a time derivative.}
\begin{align} \label{rk-int}
U_{ni} &= u_n + \tau \sum_{j=1}^m a_{ij} \dot U_{nj}, \qquad i=1,\ldots,m,
\\
\label{rk-sol}
u_{n+1} &= u_n + \tau \sum_{i=1}^m b_i \dot U_{ni},
\end{align}
where
\begin{equation} \label{rk-Q}
\dot U_{ni} + A(U_{ni}) U_{ni} = f(U_{ni}), \qquad i=1,\ldots,m.
\end{equation}
In the following we consider the equation without a semilinear term ($f=0$) for ease of presentation, since the semilinear term causes no substantial problems in the analysis but just leads to longer formulas. As in the previous section, all results are however readily generalized to a semilinear term satisfying (K4).

The method is determined by its coefficient matrix $\AA = (a_{ij})$ and its  vector of weights $b=(b_i)$. The method has {\it stage order} $q$ if, with the nodes $c_i=\sum_{j=1}^m a_{ij}$,
$$
\sum_{j=1}^m a_{ij} c_j^{k-1} = \frac{c_i^k}k \quad\ (i=1,\ldots,m) \quad\text{ for } k=1,\ldots, q.
$$
We always assume that the quadrature formula with weights $b_i$ and nodes $c_i$ has at least the {\it quadrature order} $q+1$:
$$
\sum_{j=1}^m b_{j} c_j^{k-1} = \frac{1}k  \qquad\text{ for } k=1,\ldots, q+1.
$$

In the following we consider Runge--Kutta methods that have the following important properties:

{\it Algebraic stability.} \cite{BurB,Cro} The weights $b_i$ are positive, and the matrix with entries $b_ia_{ij}+b_ja_{ji}-b_ib_j$ is positive semidefinite.

{\it Coercivity.} \cite{CroR} The Runge--Kutta coefficient matrix $\AA = (a_{ij})$ is invertible, and there exist a positive diagonal matrix $\calD=\text{diag}(d_i)$ and $\alpha>0$ such that
\begin{equation}\label{rk-coerc}
v^T \calD \AA^{-1} v \ge \alpha\, v^T \calD v \quad\ \text{for all }\ v\in \R^m.
\end{equation}
Important families of methods satisfying these properties are the Gauss and Radau IIA methods with an arbitrary number of stages $m\ge 1$; see, e.g., \cite{DekV} and \cite[Chapter IV]{HairerWannerII}.
The $m$-stage Gauss and Radau IIA methods have stage order $m$, and have quadrature order $2m$ and $2m-1$, respectively.

\subsection{Existence and uniqueness of the numerical solution}

\begin{lemma}\label{lem:rk-existence}
Let conditions (K1)--(K3) hold and let the Runge--Kutta method satisfy the coercivity condition \eqref{rk-coerc}. For every $R>0$, there exists $\tau_R>0$ such that the following holds: If $u_n\in Y$ with $\| u_n \|\le R$, then for stepsizes $\tau\le\tau_R$ the Runge--Kutta equations \eqref{rk-int} and \eqref{rk-Q} have a unique solution in $Y^m$ with
\begin{align*}
 |U_{ni}| &\le C |u_n|,
\\
\|U_{ni}\| &\le C \|u_n\|,
\end{align*}
where $C$ depends only on the Runge--Kutta coefficients.
\end{lemma}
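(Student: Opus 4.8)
The plan is to follow the existence scheme of Lemma~\ref{lem:Y-bound-fi}: freeze the argument of the operators, reduce one step of the method to a \emph{coupled linear stage system}, solve that system with the help of the coercivity condition~\eqref{rk-coerc}, and then run a Banach fixed-point iteration on the nonlinearity. Since $\AA$ is invertible, I first rewrite \eqref{rk-int}--\eqref{rk-Q} (with $f=0$) as $\tau^{-1}\sum_j(\AA^{-1})_{ij}(U_{nj}-u_n)+A(U_{ni})U_{ni}=0$ for $i=1,\dots,m$. For $V=(V_i)$ in the closed ball $B_\rho:=\{V\in Y^m:\|V_i\|\le\rho\}$, with $\rho=C_*R$ and $C_*$ the $Y$-bound constant produced below, I define $\Phi(V)=W=(W_i)$ as the solution of the frozen linear system $\tau^{-1}\sum_j(\AA^{-1})_{ij}(W_j-u_n)+A(V_i)W_i=0$.

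The heart of the matter is solving this linear system and bounding $W$. On $X^m$ I use the weighted inner product $\langle V,W\rangle_\calD=\sum_i d_i(V_i,W_i)$. The decisive transfer of coercivity is the elementary fact that a real symmetric positive semidefinite matrix $P=(P_{ij})$ satisfies $\sum_{i,j}P_{ij}(V_i,V_j)\ge0$ for all $X$-valued vectors, seen by diagonalizing $P=\sum_k\lambda_k g_k g_k^\T$ and reading the sum as $\sum_k\lambda_k|\sum_i g_{k,i}V_i|^2$. Applied to the positive semidefinite matrix $\tfrac12(\calD\AA^{-1}+(\AA^{-1})^\T\calD)-\alpha\calD$ coming from \eqref{rk-coerc}, and using symmetry of the $X$-inner product, this gives $\sum_{i,j}(\calD\AA^{-1})_{ij}(V_i,V_j)\ge\alpha\sum_i d_i|V_i|^2$. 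Applying $S$ to the frozen system and using \eqref{S} as $SA(V_i)S^{-1}=A(V_i)+B(V_i)$, the unknowns $\Sigma_i=SW_i$ solve $T\Sigma=g$ with $T=\tau^{-1}\AA^{-1}+\mathrm{diag}(A(V_i))+\mathrm{diag}(B(V_i))$ and $g_i=\tau^{-1}\sum_j(\AA^{-1})_{ij}Su_n$. Forming $\langle\Sigma,T\Sigma\rangle_\calD$, the coercivity transfer, the accretivity \eqref{accretive}, and the bound \eqref{B-bound} give $\langle\Sigma,T\Sigma\rangle_\calD\ge(\alpha/\tau-M_\rho^B)\|\Sigma\|_\calD^2\ge\tfrac{\alpha}{2\tau}\|\Sigma\|_\calD^2$ for small $\tau$; thus $T$ is injective with a uniform coercivity bound. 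Since $\mathrm{diag}(A(V_i))$ is m-accretive on $X^m$ and $\tau^{-1}\AA^{-1}+\mathrm{diag}(B(V_i))$ is a bounded perturbation for which the sum stays coercive, a standard m-accretive perturbation (method-of-continuity) argument yields invertibility of $T$, hence a unique $\Sigma\in\prod_iD(A(V_i))$ and $W_i=S^{-1}\Sigma_i\in Y$. Balancing the same estimate against $g$ by Cauchy--Schwarz produces the $Y$-bound $\|W_i\|=|\Sigma_i|\le C_*\|u_n\|$; crucially the $\tau^{-1}$ factors cancel, so $C_*$ depends only on $\alpha$, $\calD$ and $\AA$, not on $\tau$ or $M_\rho^B$. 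Testing instead the unfrozen system in $X$ against $W$ in $\langle\cdot,\cdot\rangle_\calD$ (here $W_i\in Y\subseteq D(A(V_i))$ by (K1), so \eqref{accretive} applies and no $B$-term occurs) gives in the same way the $X$-bound $|W_i|\le C|u_n|$.

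It remains to verify that $\Phi$ is a self-map and a contraction. Self-mapping holds because $\|W_i\|\le C_*\|u_n\|\le C_*R=\rho$, so $\Phi(B_\rho)\subseteq B_\rho$; moreover $B_\rho$ is closed in $X^m$ by the duality argument of Lemma~\ref{lem:Y-bound-fi}, hence complete for the metric $d(V,\widetilde V)=\|V-\widetilde V\|_\calD$. For the contraction, writing $W=\Phi(V)$, $\widetilde W=\Phi(\widetilde V)$ and subtracting the two frozen systems gives $\tau^{-1}\AA^{-1}(\widetilde W-W)+\mathrm{diag}(A(\widetilde V_i))(\widetilde W-W)=-\mathrm{diag}\big((A(\widetilde V_i)-A(V_i))W_i\big)$; forming $\langle\widetilde W-W,\cdot\rangle_\calD$ of both sides, using coercivity and \eqref{accretive} on the left and the Lipschitz bound \eqref{Lip-A} with $\|W_i\|\le C_*R$ on the right, yields $\|\widetilde W-W\|_\calD\le(\tau/\alpha)\,L_\rho^A C_*R\,\|\widetilde V-V\|_\calD$. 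Hence $\Phi$ is a contraction once $\tau\le\tau_R$, and Banach's theorem gives a unique fixed point in $B_\rho$, i.e. the stages $(U_{ni})$ with the stated bounds; uniqueness in $Y^m$ follows since the a priori $Y$-bound forces any solution into $B_\rho$.

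The step I expect to be the main obstacle is the invertibility and uniform a~priori estimate for the \emph{coupled} linear stage operator $T$: unlike the one-stage midpoint case, $\AA^{-1}$ is non-symmetric and genuinely couples the stages, so plain m-accretivity of a single $A(\cdot)$ no longer suffices. The coercivity condition~\eqref{rk-coerc}, transferred to $X$-valued vectors via the positive-semidefinite-matrix observation and combined with \eqref{accretive}, is precisely what restores a one-sided energy estimate uniform in $\tau$; converting that estimate into surjectivity of $T$, rather than mere injectivity, is the one place where an argument beyond the energy estimate itself is required.
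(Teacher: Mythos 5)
Your proposal follows essentially the same route as the paper's proof: the same frozen-coefficient fixed-point map $\Phi$, the same transfer of the coercivity condition \eqref{rk-coerc} to the weighted inner product $(\cdot,\cdot)_\calD$ on $X^m$ (combined with \eqref{accretive} to make the coupled stage operator accretive uniformly in $\tau$), the same application of $S$ and (K2) with the $\tau\calB(V)$ perturbation absorbed for small $\tau$ to get the $Y$-bound, and the same contraction estimate in the weighted $X^m$-norm on a ball that is closed in $X^m$. The only caution is notational: your $\|\cdot\|_\calD$ in the contraction step must be read as the weighted $X^m$-norm (the paper's $|\cdot|_\calD$), since \eqref{Lip-A} only yields Lipschitz continuity of $\Phi$ in that norm, which is exactly why completeness of $B_\rho$ in $X^m$ is invoked.
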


\begin{proof} We may assume $n=0$ and write $U_i, \dot U_i$ instead of $U_{ni},\dot U_{ni}$ for brevity. Similar to the proof of Lemma~\ref{lem:Y-bound-fi} the proof is based on constructing a contractive fixed-point iteration. Here we consider the map $\Phi: (V_i)_{i=1}^m \mapsto (W_i)_{i=1}^m$ defined by the linear Runge--Kutta equations
\begin{align}
\label{rk-W}
&W_{i} = u_0 + \tau \sum_{j=1}^m a_{ij} \dot W_{j} \qquad (i=1,\ldots,m),
\\
\label{rk-Wdot}
&\dot W_{i} + A(V_{i}) W_{i} = 0.
\end{align}
We will show that, for sufficiently small stepsizes $\tau$, the map $\Phi$ is well-defined and a contraction in the $X^m$-norm on
$$
B_{cR} = \Bigl\{ (V_i)_{i=1}^m \in Y^m \,:\, \sum_{i=1}^m d_i \|V_i\|^2 \le (cR)^2 \Bigr\},
$$
where $c$ is a constant depending only on the Runge--Kutta coefficients, which will be specified below. Again by \cite[Lemma~7.3]{Kato75}, $B_{cR}$ is a closed set in $X^m$.

(i) We first prove that $\Phi$ is a well-defined map from $B_{cR}$ to itself for sufficiently small stepsizes~$\tau$. For $V=(V_i)_{i=1}^m$, we write $\calA(V)=\text{diag}(A(V_i))$. The equations
\eqref{rk-W}--\eqref{rk-Wdot} for $W=(W_i)_{i=1}^m$ are then written compactly as
$$
\bigl(I_m \otimes I+\tau(\AA\otimes I)\calA(V)\bigr)W = \ones\otimes u_0
$$
with $\ones=(1,\ldots,1)^T\in\R^m$, or equivalently,
\begin{equation} \label{W-eq}
\bigl( \AA^{-1}\otimes I +\tau\calA (V)\bigr)W = (\AA^{-1}\ones)\otimes u_0.
\end{equation}
By conditions (K1) and \eqref{rk-coerc}, the linear operator
$$
 \AA^{-1}\otimes I - \alpha I_m\otimes I +\tau\calA(V)
$$
is m-accretive with respect to the inner product on $X^m$ given by
$(W,\widetilde W)_\calD = \sum_{i=1}^m d_i (W_i,\widetilde W_i)$ with the corresponding norm
$|W|_\calD = (W, W)_\calD^{1/2}$.
Hence, equation \eqref{W-eq} has a unique solution $W\in D(\calA(V))$, and
$$
|W|_\calD \le \alpha^{-1} \,| (\AA^{-1}\ones) \otimes u_0 |_\calD \le c_0 |u_0|,
$$
where $c_0$ depends only on the Runge--Kutta coefficients.
We now recall condition (K2), which yields, with $\calB(V)=\text{diag}(B(V_i))$ and $\calS=I_m\otimes S$,
$$
\calA(V) = \calS^{-1} \calA(V) \calS + \calS^{-1} \calB(V) \calS.
$$
Therefore, $Z\in X^m$ is a solution of
\begin{equation}\label{Z-eq}
\bigl((\AA^{-1}\otimes I)+\tau\calA(V)+\tau\calB(V)\bigr)Z = (\AA^{-1}\ones) \otimes (Su_0),
\end{equation}
if and only if $W= \calS^{-1} Z\in Y^m$ solves \eqref{W-eq}. In view of \eqref{B-bound},
the $X^m$ operator norm of $\tau\calB(V)$ is bounded by
$$
|\tau \calB(V)| \le \tau  M_{CR}^B,
$$
where $C=c/\min_i \sqrt{d_i}$.
For sufficiently small $\tau$, the operator norm of $\tau\calB(V)$ is therefore bounded by $\alpha/2$, and then equation \eqref{Z-eq} has a unique solution $Z\in D(\calA(V))$ and
$$
\| W \|_\calD= |Z|_\calD \le \frac1{\alpha-\tau C_1M_{CR}^B} \, |(\AA^{-1}\ones) \otimes Su_0|_\calD \le  \frac1{\alpha-\tau M_{CR}^B} \,c_0\|u_0\| \le cR,
$$
where $\| W \|_\calD^2 = \sum_{i=1}^m d_i \|W_i\|^2$ and $c=2c_0/\alpha$.

%
%
%

(ii) Finally we show that $\Phi:B_{cR}\to B_{cR}$ is a contraction with respect to the $X^m$-norm $|\cdot|_\calD$ for sufficiently small stepsizes $\tau$. Let $W_i$ be defined by \eqref{rk-W}--\eqref{rk-Wdot} and similarly $\widetilde W_i$ by the same equations with $V_i$ replaced by $\widetilde V_i$. We denote
$E_i=W_i-\widetilde W_i$ and $\dot E_i=\dot W_i-\dot{\widetilde W_i}$ so that
\begin{align*}
& E_i = \tau \sum_{j=1}^m a_{ij} \dot E_j
\\
& \dot E_i + A(V_i) E_i = - \bigl( A(V_i)-A(\widetilde V_i)\bigr) \widetilde W_i.
\end{align*}
This is rewritten as
$$
((\AA^{-1}\otimes I)+\tau\calA(V))E = G:= -  \tau \Bigl(\bigl( A(V_i)-A(\widetilde V_i)\bigr) \widetilde W_i \Bigr)_{i=1}^m.
$$
We thus have, in view of the m-accretivity of $\calA(V)$, of the Lipschitz bound \eqref{Lip-A} and the bound $\| \widetilde W \|_\calD \le cR$,
$$
|E|_\calD \le \alpha^{-1} |G|_\calD \le \alpha^{-1} \tau L_{CR}^A \,|V-\widetilde V|_\calD \, cR.
$$
This shows that $\Phi$ is a contraction for sufficiently small~$\tau$.
The stated result then follows from the Banach fixed-point theorem.
\qed
\end{proof}

\subsection{Stability}
\begin{lemma}\label{lem:rk-bound}
In addition to the conditions of Lemma~\ref{lem:rk-existence}, let the Runge--Kutta method be algebraically stable. For every $R>0$, there exist $\tau_R>0$ and $C_R>0$ such that the following holds: If $u_n\in Y$ with $\| u_n \|\le R$, then for stepsizes $\tau\le\tau_R$ the Runge--Kutta equations \eqref{rk-int}--\eqref{rk-Q} have a unique numerical solution $u_{n+1}\in Y$ with
\begin{align*}
 |u_{n+1}| &\le  |u_n|,
\\
\|u_{n+1}\| & \le (1+C_R\tau)\|u_n\|.
\end{align*}
\end{lemma}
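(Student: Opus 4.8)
The plan is to combine the existence statement and stage bounds already supplied by Lemma~\ref{lem:rk-existence} with the classical energy identity that underlies the notion of algebraic stability. Existence and uniqueness of $u_{n+1}\in Y$ require nothing new: Lemma~\ref{lem:rk-existence} produces the stages $U_{ni}\in Y$ together with $\|U_{ni}\|\le C\|u_n\|$ and $|U_{ni}|\le C|u_n|$, and then $u_{n+1}=u_n+\tau\sum_i b_i\dot U_{ni}$ is determined by \eqref{rk-sol}. So the whole task is to establish the two norm bounds, and the engine for both is a single algebraic identity. For any collection $w_0,(Z_i),(g_i)$ in a real Hilbert space linked by the Runge--Kutta relations $Z_i = w_0+\tau\sum_j a_{ij}g_j$ and $w_1=w_0+\tau\sum_i b_i g_i$, I would first record
\[
|w_1|^2 = |w_0|^2 + 2\tau\sum_{i=1}^m b_i (Z_i,g_i) - \tau^2\sum_{i,j=1}^m (b_ia_{ij}+b_ja_{ji}-b_ib_j)(g_i,g_j),
\]
obtained by expanding $|w_1|^2$, eliminating $w_0$ via $w_0=Z_i-\tau\sum_j a_{ij}g_j$, and symmetrizing the double sum. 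Algebraic stability is exactly the statement that $(b_ia_{ij}+b_ja_{ji}-b_ib_j)$ is positive semidefinite, so the last sum is nonnegative (the corresponding quadratic form in Hilbert-space vectors is nonnegative), and the quadratic term may be discarded.

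For the $X$-norm bound I apply this with $w_0=u_n$, $Z_i=U_{ni}$, $g_i=\dot U_{ni}$, so that $w_1=u_{n+1}$. By the stage equation \eqref{rk-Q} (with $f=0$) and accretivity \eqref{accretive}, $(U_{ni},\dot U_{ni})=-(U_{ni},A(U_{ni})U_{ni})\le 0$; since the weights $b_i$ are positive and the quadratic term is nonpositive, the identity gives $|u_{n+1}|\le|u_n|$.

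For the $Y$-norm bound I apply the same identity to the images under $S$, with $w_0=Su_n$, $Z_i=SU_{ni}$, $g_i=S\dot U_{ni}$, $w_1=Su_{n+1}$; since $S$ is an isometry from $Y$ to $X$ this computes $\|u_{n+1}\|^2=|Su_{n+1}|^2$, and algebraic stability again removes the quadratic term. The task reduces to bounding $(SU_{ni},S\dot U_{ni})$, and here the decisive step is the commutator relation \eqref{S}: writing $S\dot U_{ni}=-SA(U_{ni})U_{ni}=-\bigl(A(U_{ni})+B(U_{ni})\bigr)SU_{ni}$ splits the inner product into an accretive part $-(SU_{ni},A(U_{ni})SU_{ni})\le 0$ and a zeroth-order part bounded via \eqref{B-bound} by $M_{CR}^B\,|SU_{ni}|^2=M_{CR}^B\|U_{ni}\|^2$, where $CR$ is the bound on $\|U_{ni}\|$ from Lemma~\ref{lem:rk-existence}. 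Inserting $\|U_{ni}\|\le C\|u_n\|$ yields $\|u_{n+1}\|^2\le(1+C'\tau)\|u_n\|^2$, and hence the claimed $\|u_{n+1}\|\le(1+C_R\tau)\|u_n\|$.

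The main obstacle is the regularity bookkeeping that makes the previous paragraph legitimate: one must justify that $S$ may be applied to $\dot U_{ni}$ and that $SU_{ni}$ lies in $D(A(U_{ni}))$, so that accretivity is available and the commutator rewriting is valid. I would handle this by inverting the stage relations \eqref{rk-int}: since $\AA$ is invertible, $\tau\dot U_{ni}=\sum_j(\AA^{-1})_{ij}(U_{nj}-u_n)\in Y$, whence $A(U_{ni})U_{ni}=-\dot U_{ni}\in Y=D(S)$. Together with $U_{ni}\in Y\subseteq D(A(U_{ni}))$, this places $SU_{ni}$ in the domain of $SA(U_{ni})S^{-1}$, and the equality of domains asserted in (K2) then upgrades this to $SU_{ni}\in D\bigl(A(U_{ni})\bigr)$ while simultaneously validating the identity $S\dot U_{ni}=-\bigl(A(U_{ni})+B(U_{ni})\bigr)SU_{ni}$ used above.
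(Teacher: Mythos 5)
Your proposal is correct and follows essentially the same route as the paper's proof: the algebraic-stability identity to discard the quadratic term, accretivity for the $X$-norm, and the commutator relation \eqref{S} with the bound \eqref{B-bound} for the $Y$-norm. The extra paragraph on domain bookkeeping (using invertibility of $\AA$ to place $\dot U_{ni}$ in $Y$) is a sound refinement of a point the paper leaves implicit, but it does not change the argument.
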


\begin{proof} The proof follows closely the standard use of algebraic stability for contractive differential equations; see \cite{BurB,Cro} and, e.g., \cite[Section IV.12]{HairerWannerII}. We take again $n=0$ and write $U_i$ for $U_{ni}$.
By \eqref{rk-sol} we have
$$
|u_{1}|^2 = |u_0|^2 + 2\tau \sum_{i=1}^m b_i (u_0,\dot U_i) +
\tau^2 \sum_{i,j=1}^m b_i b_j (\dot U_i, \dot U_j).
$$
Expressing $u_0$ in the second term on the right-hand side by \eqref{rk-int}, we obtain
$$
|u_{1}|^2 = |u_0|^2 + 2\tau \sum_{i=1}^m b_i (U_i,\dot U_i) -
\tau^2 \sum_{i,j=1}^m (b_ia_{ij}+b_ja_{ji}-b_i b_j) (\dot U_i, \dot U_j).
$$
By algebraic stability, we thus have
$$
|u_{1}|^2 \le |u_0|^2 + 2\tau \sum_{i=1}^m b_i (U_i,\dot U_i).
$$
Since $b_i>0$ and
$(U_i,\dot U_i)=-(U_i,A(U_i)U_i)\le 0$ by (K1), we obtain the bound $ |u_{1}|^2 \le  |u_0|^2$.

For the bound in the $Y$-norm we obtain in the same way
$$
|Su_{1}|^2 \le |Su_0|^2 + 2\tau \sum_{i=1}^m b_i (SU_i,S\dot U_i).
$$
Here we note, using subsequently \eqref{rk-Q}, (K2), (K1) and Lemma~\ref{lem:rk-existence},
\begin{align*}
(SU_i,S\dot U_i) &= - (SU_i, SA(U_i)S^{-1}SU_i) =
- (SU_i, A(U_i)SU_i) - (SU_i, B(U_i)SU_i)
\\
& \le M_{CR}^B |SU_i|^2 = M_{CR}^B \|U_i\|^2 \le M_{CR}^B \, (CR)^2 \| u_0\|^2,
\end{align*}
and the result follows.
\qed
\end{proof}
%
%

Suppose that $u_n^\star \in Y$ and $U_{ni}^\star \in Y$ (which will later be taken as the exact solution values $u(t_n)$ and $u(t_n+c_i \tau)$) solve \eqref{rk-int}--\eqref{rk-sol} up to the defects $d_{n+1}\in Y$ and $D_{ni} \in Y$, for $0 \leq n\tau \leq T$:
\begin{align}
    \label{rk-int-star}
    U_{ni}^\star &= u_n^\star + \tau \sum_{j=1}^m a_{ij} \dot U_{nj}^\star + D_{ni}, \qquad i=1,\ldots,m,
    \\
    \label{rk-sol-star}
    u_{n+1}^\star &= u_n^\star + \tau \sum_{i=1}^m b_i \dot U_{ni}^\star + d_{n+1},
\end{align}
where
\begin{equation*}
    \dot U_{ni}^\star + A(U_{ni}^\star) U_{ni}^\star = 0, \qquad i=1,\ldots,m.
\end{equation*}

\begin{lemma}
\label{lem:pert-rk}
    In the above situation, suppose that for $0\le n\tau \le T$ and for $i=1,\ldots,m$,
    \begin{align*}
      & \|u_{n}^\star\| \le R, \ \ \|U_{ni}^\star\| \le R \quad\text{ and }\quad SU_{ni}^\star\in Y \text{ with }\ \| SU_{ni}^\star \| \le K.
          \end{align*}
    Then, there exist  $\overline\tau>0$ and $\delta>0$, which depend only on $K$, $R$, $T$  and the coefficients of the Runge--Kutta method, such that for stepsizes $\tau\le\overline\tau$ and perturbations satisfying
    $$
    \| u_0- u_0^\star\|^2 + \tau\sum_{0\le n\tau\le T} \Big( \sum_{i=1}^m \|SD_{ni}\|^2 + \Big\|\frac{d_{n+1}}{\tau}\Big\|^2 + \|S d_{n+1}\|^2\Big) \le \delta^2,
    $$
    the error satisfies, for $0\le n\tau \le T$,
    \begin{align*}
    \| u_n- u_n^\star \|^2 &\le C_Y  \Bigl(  \| u_0- u_0^\star\|^2 +\tau\sum_{k=0}^{n-1} \Big(\sum_{i=1}^m \|SD_{ki}\|^2 + \Big\|\frac{d_{k+1}}{\tau}\Big\|^2 + \|S d_{k+1}\|^2\Big) \Bigr),
    \\
    | u_n - u_n^\star |^2 &\le C_X  \Bigl(  | u_0- u_0^\star |^2 +\tau\sum_{k=0}^{n-1} \Big(\sum_{i=1}^m \|D_{ki}\|^2 + \Big|\frac{d_{k+1}}{\tau}\Big|^2 + \|d_{k+1}\|^2\Big) \Bigr) ,
    \end{align*}
    where $C_Y$  depends only on $K$, $R$ and $T$, and $C_X$ depends only on $R$
    and~$T$.
\end{lemma}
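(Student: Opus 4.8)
The plan is to combine the algebraic-stability energy argument of Lemma~\ref{lem:rk-bound} with the coercivity-based stage estimate of Lemma~\ref{lem:rk-existence}, now carrying along the two defects. Writing $e_n=u_n-u_n^\star$, $E_{ni}=U_{ni}-U_{ni}^\star$ and $\dot E_{ni}=\dot U_{ni}-\dot U_{ni}^\star$, subtraction of \eqref{rk-int-star}--\eqref{rk-sol-star} from \eqref{rk-int}--\eqref{rk-sol} and of the starred stage equations from \eqref{rk-Q} (with $f=0$) gives the error relations
\begin{align*}
&E_{ni}+D_{ni} = e_n + \tau\sum_{j=1}^m a_{ij}\dot E_{nj}, \qquad e_{n+1}+d_{n+1} = e_n + \tau\sum_{i=1}^m b_i \dot E_{ni},\\
&\dot E_{ni} + A(U_{ni})E_{ni} = -\bigl(A(U_{ni})-A(U_{ni}^\star)\bigr)U_{ni}^\star .
\end{align*}
The key observation I would build on is that the \emph{shifted} quantities $\widehat E_{ni}=E_{ni}+D_{ni}$ and $\widehat e_{n+1}=e_{n+1}+d_{n+1}$ satisfy the exact Runge--Kutta relations with internal stages $\widehat E_{ni}$, stage derivatives $\dot E_{ni}$ and starting value $e_n$. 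Hence the algebraic-stability manipulation of Lemma~\ref{lem:rk-bound} applies verbatim and, in either the $X$-inner product or after inserting $S$ throughout, yields $|\widehat e_{n+1}|^2 \le |e_n|^2 + 2\tau\sum_i b_i(\widehat E_{ni},\dot E_{ni})$, the defects re-entering only through $\widehat E_{ni}-E_{ni}=D_{ni}$ and $\widehat e_{n+1}-e_{n+1}=d_{n+1}$.

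For the $X$-norm bound I would first pass back from $\widehat e_{n+1}$ to $e_{n+1}$ using $|e_{n+1}|^2 \le |\widehat e_{n+1}|^2 + 2|e_{n+1}|\,|d_{n+1}| \le |\widehat e_{n+1}|^2 + \tau\,|e_{n+1}|^2 + \tau\,|d_{n+1}/\tau|^2$, which is exactly where the stepsize-scaled defect $d_{n+1}/\tau$ of the hypothesis appears. In $2\tau\sum_i b_i(\widehat E_{ni},\dot E_{ni})$ I split off $D_{ni}$: the genuine part $(E_{ni},\dot E_{ni})$ is handled as in Lemma~\ref{lem:rk-bound}, discarding $-(E_{ni},A(U_{ni})E_{ni})\le0$ by \eqref{accretive} and bounding the remainder by $L_{2R}^A R\,|E_{ni}|^2$ via \eqref{Lip-A} and $\|U_{ni}^\star\|\le R$; the defect part $2\tau\sum_i b_i(D_{ni},\dot E_{ni})$ I would \emph{not} expand through $A$ but rewrite with $\tau\dot E=(\AA^{-1}\otimes I)(\widehat E-\ones\otimes e_n)$, turning it into inner products of $D_{ni}$ with $E_{nj},D_{nj},e_n$, hence into $|D_{ni}|^2,|E_{ni}|^2,|e_n|^2$. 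The stage errors are then bounded exactly as in Lemma~\ref{lem:rk-existence}: from $((\AA^{-1}\otimes I)+\tau\calA(U))E=(\AA^{-1}\otimes I)(\ones\otimes e_n-D)-\tau G$ with $G_i=(A(U_{ni})-A(U_{ni}^\star))U_{ni}^\star$, the m-accretivity coming from \eqref{rk-coerc} and \eqref{accretive} gives $|E|_\calD \le C(|e_n|+|D|_\calD)$ for small $\tau$. Collecting everything yields $|e_{n+1}|^2-|e_n|^2 \le C_R\tau(|e_n|^2+|e_{n+1}|^2)+C_R\tau\sum_i|D_{ni}|^2 + \tau|d_{n+1}/\tau|^2$, and the $X$-bound follows from a discrete Gronwall inequality after absorbing the $|e_{n+1}|^2$ term; only $R$-dependent constants enter, so $C_X$ depends only on $R$ and $T$, and $|D_{ni}|\le\|D_{ni}\|$, $|d_{n+1}|\le|d_{n+1}/\tau|$ give the right-hand side as stated.

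For the $Y$-norm bound I would apply $S$ to all three error relations and use (K2) in the form $SA(y)=(A(y)+B(y))S$, so that $S\dot E_{ni}=-(A(U_{ni})+B(U_{ni}))SE_{ni}-S(A(U_{ni})-A(U_{ni}^\star))U_{ni}^\star$, and then rewrite $S(A(U_{ni})-A(U_{ni}^\star))U_{ni}^\star=(A(U_{ni})-A(U_{ni}^\star))SU_{ni}^\star+(B(U_{ni})-B(U_{ni}^\star))SU_{ni}^\star$ by the same identity. The algebraic-stability inequality in the $S$-variables gives $\|\widehat e_{n+1}\|^2\le \|e_n\|^2+2\tau\sum_i b_i(S\widehat E_{ni},S\dot E_{ni})$. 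In $(SE_{ni},S\dot E_{ni})$, accretivity \eqref{accretive} discards the $A$-term, \eqref{B-bound} bounds the $B$-term by $M_{2R}^B\|E_{ni}\|^2$, and \eqref{Lip-A}, \eqref{Lip-B} with $\|SU_{ni}^\star\|\le K$ bound the remainder by $C_{K,R}\|E_{ni}\|^2$; the defect part $(SD_{ni},S\dot E_{ni})$ is again treated by substituting $\tau S\dot E=(\AA^{-1}\otimes I)(SE+SD-\ones\otimes Se_n)$, producing $\|D_{ni}\|^2,\|E_{ni}\|^2,\|e_n\|^2$ since $|SD_{ni}|=\|D_{ni}\|$. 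The stage estimate at the $S$-level follows from the coercive and accretive operator $(\AA^{-1}\otimes I)+\tau\calA(U)+\tau\calB(U)$, whose $\tau\calB(U)$-part is controlled by \eqref{B-bound} for small $\tau$ exactly as in Lemma~\ref{lem:rk-existence}, giving $\|E\|_\calD\le C(\|e_n\|+\|D\|_\calD)$. Treating $d_{n+1}$ in the $Y$-norm as before (now producing $\tau\|d_{n+1}/\tau\|^2$) and applying a discrete Gronwall inequality yields the $Y$-bound with $C_Y$ depending on $K,R,T$; since $\|D_{ni}\|\le\|SD_{ni}\|$ and $\|d_{n+1}\|\le\|Sd_{n+1}\|$, one obtains the slightly weaker right-hand side exactly as stated. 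As in Lemmas~\ref{lem:pert-li} and \ref{lem:rk-bound}, I would fix $\delta$ so small that this $Y$-bound guarantees $\|u_n\|\le 2R$ throughout, which justifies a posteriori the use of the constants indexed by $2R$ and the applicability of Lemmas~\ref{lem:rk-existence}--\ref{lem:rk-bound}.

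The main obstacle is to reconcile the algebraic-stability telescoping --- which is an identity for the \emph{exact} Runge--Kutta map and leaves no room for inhomogeneities --- with the presence of the two defects $D_{ni}$ and $d_{n+1}$. The device of shifting to $\widehat E_{ni}$ and $\widehat e_{n+1}$ restores the exact structure and is what makes the argument go through, the accompanying subtlety being to extract the correct powers of $\tau$, in particular to let $d_{n+1}$ enter only through $d_{n+1}/\tau$. A secondary difficulty, present only in the $Y$-estimate, is that the energy argument is carried out for $SE_{ni}$ and therefore tacitly requires the extra regularity $SU_{ni}\in Y$ of the numerical stages; as in the remark after Lemma~\ref{lem:pert-cont} and in the proof of Lemma~\ref{lem:pert-li}, this is ensured by running the existence argument one regularity level higher, and I would invoke Lemma~\ref{lem:rk-existence} at that level to make the manipulations with $S$ rigorous.
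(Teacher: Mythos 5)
Your overall architecture is the paper's: you pass to the shifted quantities $\widehat E_{ni}=E_{ni}+D_{ni}$, $\widehat e_{n+1}=e_{n+1}+d_{n+1}$ so that algebraic stability applies exactly, you conjugate with $S$ via (K2), you bound the stage errors by $\|E\|_\calD\le C(\|e_n\|+\|D\|_\calD)$ using coercivity, and you close with a discrete Gronwall inequality and a smallness condition on $\delta$. Your treatment of $d_{n+1}$ (Young's inequality with weight $\tau$, producing $\tau\|d_{n+1}/\tau\|^2$) is fine and even a little simpler than the paper's. The worry in your last paragraph about extra regularity of the numerical stages is unnecessary: since $E_{ni}\in Y\subset D(A(U_{ni}))$ and (K2) asserts equality of domains for $SA(y)S^{-1}$ and $A(y)+B(y)$, one gets $SE_{ni}\in D(A(U_{ni}))$ for free, so no second-level existence argument is needed.

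There is, however, a genuine gap in your treatment of the cross term $2\tau\sum_i b_i(SD_{ni},S\dot E_{ni})$ (and its $X$-analogue $2\tau\sum_i b_i(D_{ni},\dot E_{ni})$). Substituting $\tau S\dot E=(\AA^{-1}\otimes I)(SE+SD-\ones\otimes Se_n)$ cancels the factor $\tau$, so this term becomes $2\sum_{i,j}b_i(\AA^{-1})_{ij}(SD_{ni},SE_{nj}+SD_{nj}-Se_n)$, which is of size $\|D_n\|\bigl(\|E_n\|+\|D_n\|+\|e_n\|\bigr)$ with an $O(1)$ constant, \emph{not} $O(\tau)$ times squares. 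Your claimed per-step inequality with $C_R\tau\sum_i|D_{ni}|^2$ therefore does not follow from your own manipulation: to make the coefficient of $\|e_n\|^2$ of size $O(\tau)$ (as Gronwall over $n\sim T/\tau$ steps requires), Young's inequality forces a coefficient $O(\tau^{-1})$ on $\|D_n\|^2$, and after summation you obtain $\tau^{-1}\sum_n\|D_n\|^2$ in place of the stated $\tau\sum_n\|SD_{ni}\|^2$ --- a loss of a full power of $\tau$, which would degrade Theorem~\ref{thm:conv-rk-stage-order} from $\tau^{q+1}$ to $\tau^{q}$. The paper's device, which is the one genuinely nontrivial idea of this proof, is to keep the factor $\tau$ and instead expand $S\dot E_{ni}$ through the stage equation \eqref{S-multiplied-rk-Q-err}, moving $A(U_{ni})$ onto the defect by passing to the adjoint, $(SD_{ni},A(U_{ni})SE_{ni})=(A(U_{ni})^*SD_{ni},SE_{ni})$, and proving $|A(y)^*w|\le (M_{CR}^A+M_{CR}^B)\|w\|$ by a duality argument based on (K2). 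This yields the per-step bound $\tau C_{K,R}(\|E_{ni}\|^2+\|SD_{ni}\|^2)$ and is precisely why the hypotheses and the right-hand sides of the lemma involve the one-higher norms $\|SD_{ni}\|$ (and $\|D_{ki}\|$, $\|d_{k+1}\|$ in the $X$-bound): the extra application of $S$ to the defect is the price paid for retaining the factor $\tau$. Without this step your argument does not prove the lemma as stated.
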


\begin{proof}
The proof is similar to those of Lemmas~\ref{lem:pert-cont} and \ref{lem:pert-li}.
We denote the errors by
$$
e_n= u_n-u_n^\star,\qquad E_{ni} = U_{ni}-U_{ni}^\star, \qquad \dot E_{ni} = \dot U_{ni}- \dot U_{ni}^\star .
$$
We begin with $n=0$, and we write $E_i,U_i,U_i^\star$ instead of $E_{0i},U_{0i},U_{0i}^\star$ and analogously for the corresponding quantities carrying a dot. By subtracting the original and perturbed Runge--Kutta equations  we obtain
\begin{align}
    \label{rk-int-err}
    E_{i} &= e_0 + \tau \sum_{j=1}^m a_{ij} \dot E_{j} - D_{i}, \qquad i=1,\ldots,m,
    \\
    \label{rk-sol-err}
    e_1 &= e_0 + \tau \sum_{i=1}^m b_i \dot E_{i} - d_1,
\end{align}
where
\begin{equation}
\label{rk-Q-err}
    \dot E_i + A(U_i) E_i = -\big( A(U_i)-A(U_i^\star) \big) U_i^\star, \qquad i=1,\ldots,m.
\end{equation}
By condition (K2), the latter equation is equivalent to
\begin{equation}
\label{S-multiplied-rk-Q-err}
    \begin{aligned}
        S \dot E_i + A(U_i) SE_i + B(U_i) SE_i =&\ -\big( A(U_i)-A(U_i^\star) \big) SU_i^\star \\
        &\ -\big( B(U_i)-B(U_i^\star) \big) SU_i^\star .
    \end{aligned}
\end{equation}
As in the proof of Lemma~\ref{lem:rk-bound}, using \eqref{rk-int-err}--\eqref{rk-sol-err} and algebraic stability we obtain
\begin{equation*}
    \|e_1\|^2 - \|e_0\|^2 \leq 2 \tau \sum_{i=1}^m (SE_i+SD_i,S \dot E_i) - 2(Se_0 + \tau \sum_{i=1}^m b_i S \dot E_i, Sd_1) + \|d_1\|^2 .
\end{equation*}
We estimate the first two terms on the right-hand side separately.
We write
 \begin{align*}
    &(SE_i+SD_i,S \dot E_i)
    \\
    &= - (SE_i , A(U_i) SE_i) - (A(U_i)^*SD_i , SE_i)
     - (SE_i+SD_i , B(U_i) SE_i)
     \\
    &\ -(SE_i+SD_i , ( A(U_i)-A(U_i^\star) ) SU_i^\star)
     -(SE_i+SD_i , ( B(U_i)-B(U_i^\star) ) SU_i^\star) ,
 \end{align*}
 and note that the adjoint operator $A(y)^*$ is bounded like $A(y)$ for $\|y\|\le CR$: using that $Y$ is dense in $X$, and condition (K2) and recalling that $S$ is an isometry between $Y$ and $X$,
\begin{align*}
&| A(y)^* w | = \sup_{0\ne v \in Y} \frac{(A(y)^* w,v)}{|v|} =  \sup_{0\ne v \in Y} \frac{( w,A(y)v)}{|v|}
\\
&= \sup_{0\ne v \in Y} \frac{( Sw,S^{-1}A(y)SS^{-1}v)}{|v|} =
\sup_{0\ne v \in Y} \frac{( Sw,A(y)S^{-1}v + B(y)S^{-1}v)}{|v|}
\\
&\le \sup_{0\ne v \in Y} \frac{|S w| \cdot (M_{CR}^A  + M_{CR}^B) \|S^{-1}v\|}{|v|} =
\| w \| \cdot (M_{CR}^A  + M_{CR}^B).
\end{align*}
Using the relation \eqref{S-multiplied-rk-Q-err} and the accretivity \eqref{accretive}, the bounds \eqref{B-bound}--\eqref{Lip-f-Y}, we therefore obtain, as long as $\| U_i \| \leq CR$ ($i=1,\dotsc,m$),
\begin{align*}
    (SE_i+SD_i,S \dot E_i)
    \leq &\ (M_{CR}^A  + M_{CR}^B) \|SD_i\| \|E_i\| \\
    &\ + \big( M_{CR}^B + L_{CR}^A K + L_{CR}^B R \big)\|E_i+D_i\| \|E_i\| \\
    \leq &\ C_{K,R} \|E_i\|^2 + C_{K,R} \|SD_i\|^2 ,
\end{align*}
where we also used the norm relation $\|D_i\|=|S D_i| \leq \|S D_i\|$.

The other term is estimated similarly:
\begin{align*}
    (Se_0 + \tau \sum_{i=1}^m b_i S \dot E_i, Sd_1) \leq \|e_0\| \|d_1\| + \tau \sum_{i=1}^m b_i (S \dot E_i, Sd_1) ,
\end{align*}
where the terms in the sum are bounded by
\begin{align*}
    (S \dot E_i, Sd_1) \leq &\ (M_{CR}^A  + M_{CR}^B) \|E_i\| \|S d_1\|
    + \big( L_{CR}^A K + L_{CR}^B R \big) \|E_i\| \|d_1\| \\
    \leq &\  \|E_i\|^2 +  C_{K,R} \|S d_1\|^2. 
\end{align*}
By combining these estimates we obtain
\begin{align*}
    (Se_0 + \tau \sum_{i=1}^m b_i S \dot E_i, Sd_1)
    \leq &\ \tau \|e_0\|^2 + \tau c_0 \sum_{i=1}^m \|E_i\|^2 \\
    &\ + \tau C_{K,R} \|S d_1\|^2 + \tau C_{K,R} \Big\|\frac{d_1}{\tau}\Big\|^2 .
\end{align*}
Altogether, we have
\begin{align*}
    \|e_1\|^2 - \|e_0\|^2 \leq &\ \tau \|e_0\|^2 + \tau C_{K,R} \sum_{i=1}^m \|E_i\|^2  \\
    &\ + \tau C_{K,R} \sum_{i=1}^m \|SD_i\|^2 + \tau C_{K,R} \Big\|\frac{d_1}{\tau}\Big\|^2 + \tau C_{K,R} \|S d_1\|^2.
\end{align*}
To estimate the terms $\|E_i\|^2$, we use the coercivity property \eqref{rk-coerc} of the Runge--Kutta method.
We use the notations of the proof of Lemma~\ref{lem:rk-existence} and $E=(E_1,\dotsc,E_m)^T$, $\dot E=(\dot E_1,\dotsc,\dot E_m)^T$ and $D=(D_1,\dotsc,D_m)^T$. We thus rewrite \eqref{rk-int} as
\begin{equation*}
    E = \ones \otimes e_0 + \tau (\AA \otimes I) \dot E - D .
\end{equation*}
We multiply both sides by $\calD\AA^{-1} \otimes S$, use \eqref{S-multiplied-rk-Q-err} and (K2), and then take the inner product with $\calS E$, where again $\calS=I_m \otimes S$. Using similar estimates as above we obtain
\begin{align*}
    (\calS E, (\calD\AA^{-1} \otimes I) \calS E)
    = &\ \tau (\calS E, \calS \dot E)_{\calD} + (\calS E, (\calD\AA^{-1} \otimes I) (\ones \otimes Se_0 - \calS D)) \\
    \leq &\ \tau \big( M_{CR}^B + L_{CR}^A mK  + L_{CR}^B mR \big) \|E\|_{\calD}^2  \\
    &\ + c_0 \|E\|_{\calD}(\|e_0\| + \|D\|) \\
    \leq &\ \tau C_{K,R} \|E\|_{\calD}^2 + c_0 \|E\|_{\calD}(\|e_0\| + \|D\|) ,
\end{align*}
where the constant $c_0$ only depends on the method.
Using the coercivity of the Runge--Kutta method on the left-hand side, an absorption (by choosing the stepsize to satisfy $\tau C_{K,R} \le \alpha/2$) and Young's inequality for the right-hand side yields the bound
\begin{equation*}
    \sum_{i=1}^m\|E_i\|^2 \leq c \Big(\|e_0\|^2 + \sum_{i=1}^m \|D_i\|^2 \Big) .
\end{equation*}
Finally, combining all estimates we obtain
\begin{align*}
    \|e_1\|^2 - \|e_0\|^2 \leq &\ \tau (1+C_{K,R}c) \|e_0\|^2  \\
    &\ +\tau C_{K,R} \sum_{i=1}^m \|SD_i\|^2 +  \tau C_{K,R} \Big\|\frac{d_1}{\tau}\Big\|^2 + \tau C_R \|S d_1\|^2 .
\end{align*}
The analogous estimate for $ \|e_{n+1}\|^2 - \|e_n\|^2 $ holds for all $n$ as long as $\|U_{ni}\|\le CR$. Summing over $n$ and applying the discrete Gronwall inequality, we obtain the stated error bound in the $Y$-norm.
Choosing $\delta$ so small that $C_Y\delta\le R$, the condition $\| U_{ni} \|\le CR$ then remains satisfied for $0\leq n\tau \le T$.
The $X$-norm error bound is obtained analogously, using \eqref{accretive} and \eqref{Lip-A}.
\qed
\end{proof}

\subsection{Convergence with the stage order plus 1}

Using $u_n^\star=u(t_n)$ and $U_{ni}^\star=u(t_n+c_i\tau)$ in Lemma~\ref{lem:pert-rk}, we obtain the following error bound.

\begin{theorem}\label{thm:conv-rk-stage-order} Let the conditions (K1)--(K4) be satisfied, and
suppose that the solution $u$ of \eqref{Q} has the regularity $u\in C^{q+2}([0,T],Y)$ with $Su\in C^{q+1}([0,T],Y)$. Then, there exists $\bar\tau>0$ such that for stepsizes $0<\tau\le\bar\tau$, the errors of an algebraically stable and coercive Runge--Kutta method with stage order $q$  and quadrature order at least $q+1$ are bounded by
$$
\| u_n -u(t_n) \| \le C \tau^{q+1},
$$
where $C$ is independent of $n$ and $\tau$ with $0\le n\tau\le T$.
\end{theorem}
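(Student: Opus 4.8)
The plan is to follow the same pattern as the proof of Theorem~\ref{thm:conv-mpr}: insert the exact solution into the perturbed Runge--Kutta equations \eqref{rk-int-star}--\eqref{rk-sol-star}, bound the resulting defects as consistency errors, and then invoke the stability estimate of Lemma~\ref{lem:pert-rk}. Concretely, I would set $u_n^\star = u(t_n)$ and $U_{ni}^\star = u(t_n+c_i\tau)$. Since $u$ solves \eqref{Q} with $f=0$, i.e.\ $\dot u = -A(u)u$, the associated stage derivatives are simply
$$
\dot U_{ni}^\star = -A(U_{ni}^\star)U_{ni}^\star = \dot u(t_n+c_i\tau),
$$
so that the defects reduce to the familiar interpolation and quadrature residuals of the exact flow. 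With $R=\max_{0\le t\le T}\|u(t)\|$ and $K=\max_{0\le t\le T}\|Su(t)\|$, the boundedness hypotheses $\|u_n^\star\|\le R$, $\|U_{ni}^\star\|\le R$ and $\|SU_{ni}^\star\|\le K$ of Lemma~\ref{lem:pert-rk} are satisfied, since for $0\le n\tau\le T$ all arguments $t_n+c_i\tau$ lie in $[0,T]$ (using $c_i\in[0,1]$), and the initial error vanishes, $u_0-u_0^\star=0$.

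The heart of the argument is estimating the three defect quantities appearing in Lemma~\ref{lem:pert-rk}. For the stage defects,
$$
D_{ni} = u(t_n+c_i\tau) - u(t_n) - \tau\sum_{j=1}^m a_{ij}\dot u(t_n+c_j\tau),
$$
a Taylor expansion about $t_n$ shows that the coefficient of $\tau^k$ is proportional to $c_i^k/k - \sum_j a_{ij}c_j^{k-1}$, which vanishes for $k=1,\ldots,q$ by the stage order condition; hence $\|D_{ni}\|\le C\tau^{q+1}$. The crucial point is that the fixed operator $S$ commutes with differentiation in time, so $S\dot u=(Su)'$ and $SD_{ni}$ is exactly the same stage residual applied to the function $Su$. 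The stage order condition together with the regularity $Su\in C^{q+1}([0,T],Y)$ then yields $\|SD_{ni}\|\le C\tau^{q+1}$.

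For the step defect,
$$
d_{n+1} = u(t_{n+1}) - u(t_n) - \tau\sum_{i=1}^m b_i\dot u(t_n+c_i\tau) = \tau\Bigl(\int_0^1 g(s)\,\d s - \sum_{i=1}^m b_i g(c_i)\Bigr),\quad g(s)=\dot u(t_n+s\tau),
$$
the quadrature order condition (exactness for polynomials of degree $\le q$) together with $u\in C^{q+2}([0,T],Y)$ gives $\|d_{n+1}\|\le C\tau^{q+2}$, hence $\|d_{n+1}\|/\tau\le C\tau^{q+1}$. Applying $S$ and again using $S\dot u=(Su)'$, the residual $Sd_{n+1}$ is the quadrature residual of $(Su)'$; since only $Su\in C^{q+1}([0,T],Y)$ is available, I would exploit exactness up to degree $q-1$ to obtain $\|Sd_{n+1}\|\le C\tau^{q+1}$, which is precisely the order the stability bound requires. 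Feeding these three bounds into Lemma~\ref{lem:pert-rk}, each summand is $\bigo(\tau^{2(q+1)})$, and the factor $\tau$ times the $\bigo(T/\tau)$ terms leaves $\|u_n-u(t_n)\|^2\le C\tau^{2(q+1)}$, i.e.\ the claimed bound. I expect the main obstacle to be the bookkeeping of this consistency step: obtaining all three residuals $SD_{ni}$, $d_{n+1}/\tau$ and $Sd_{n+1}$ at the common order $\tau^{q+1}$ under the minimal regularity stated, where the asymmetry between the stage order and the quadrature order conditions, and the limited smoothness of $Su$, must be matched exactly to the structure of the stability estimate.
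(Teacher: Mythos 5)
Your proposal is correct and follows essentially the same route as the paper: choose $u_n^\star=u(t_n)$, $U_{ni}^\star=u(t_n+c_i\tau)$, identify $D_{ni}$ and $d_{n+1}$ as the stage and quadrature residuals of the exact flow, and feed the bounds $\|SD_{ni}\|,\ \|d_{n+1}/\tau\|,\ \|Sd_{n+1}\|\le C\tau^{q+1}$ into Lemma~\ref{lem:pert-rk}. The paper expresses these residuals via Peano kernels (and obtains the $Sd_{n+1}$ bound by integration by parts of the kernel, which is the same device as your ``exactness up to degree $q-1$'' argument), so the two arguments coincide in substance.
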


\begin{proof} With the choice $u_n^\star=u(t_n)$ and $U_{ni}^\star=u(t_n+c_i\tau)$ in Lemma~\ref{lem:pert-rk}, the defects $D_{ni}$ and $d_{n+1}$ in \eqref{rk-int-star} and \eqref{rk-sol-star}
are just quadrature errors:
\begin{align*}
D_{ni} &= \tau^{q} \int_{t_n}^{t_{n+1}} \kappa_i\Bigl(\frac{t-t_n}\tau\Bigr) u^{(q+1)}(t)\, \d t ,
\\
d_{n+1} &= \tau^{q+1} \int_{t_n}^{t_{n+1}} \kappa\Bigl(\frac{t-t_n}\tau\Bigr) u^{(q+2)}(t)\, \d t =
- \tau^{q} \int_{t_n}^{t_{n+1}} \kappa'\Bigl(\frac{t-t_n}\tau\Bigr) u^{(q+1)}(t)\, \d t
\end{align*}
with real-valued, bounded Peano kernels $\kappa_i$ and $\kappa$. The result then follows from Lemma~\ref{lem:pert-rk}.
\qed
\end{proof}

\subsection{Convergence with the classical order}
A Runge--Kutta method has {\it classical order} $p$ if the local error (i.e., the error after one step starting from the exact solution) is of size $\bigo(\tau^{p+1})$ whenever the method is applied to an ordinary differential equation $\dot y=f(y)$ in $\R^n$ with an arbitrarily differentiable function $f$. We recall that the classical order of the $m$-stage Gauss and Radau IIA methods is $2m$ and $2m-1$, respectively, whereas the stage order of these methods is $m$; see \cite[Chapter IV]{HairerWannerII}.

We now show that for the quasi-linear problem \eqref{Q} we can retain the classical order under additional regularity conditions.
The first such condition is a generalization of condition (K2):

\medskip\noindent
For $k=1,\dots,p-q$ and for every $y\in Y$ with $\|y\|\le R$,
\begin{equation}
\label{S-k}
S^kA(y)S^{-k} = A(y) + B_k(y),
\end{equation}
where $B_k(y)$ is $Y$-locally uniformly bounded on $X$:
\begin{equation}
\label{B-bound-k}
|B_k(y)v| \le M_{k,R} \, |v| \qquad\text{for all }\ v\in X.
\end{equation}

\medskip
With $L(Y,X)$ denoting the Banach space of bounded linear operators from $Y$ to $X$ (and analogously $L(X,X)$), we further suppose that the operators $A$ and $B$ of \eqref{S} satisfy the following:

\medskip
 $A(\cdot):\, y\in Y \mapsto A(y) \in L(Y,X)$ and $B_k(\cdot): y\in Y \mapsto B(y) \in L(X,X)$ are arbitrarily differentiable, and for any $R>0$, their derivatives up to any fixed order are uniformly bounded for $\| y \| \le R$.
\medskip

In the presence of a semilinear term $f(y)$ in \eqref{Q} (which we have discarded in this section), a similar $Y$-locally uniform differentiability condition is required for $f$.
We note that the above conditions are satisfied in all the examples of \cite{Kato75}.

The following theorem can be viewed as an extension of the full-order error bounds for {\it linear} evolution equations in \cite{Cro-these,LubO-int,Mansour} to the quasi-linear case studied here.

\pagebreak[2]

\begin{theorem}\label{thm:conv-rk-classical-order} Let  (K1)--(K4) and the above conditions be satisfied, and
suppose that the solution $u$ of \eqref{Q} has the regularity $u\in C^{p+1}([0,T],Y)$ with $S^ku\in C^{p+1-k}([0,T],Y)$ for $k=1,\ldots,p-q$. Then, there exists $\bar\tau>0$ such that for stepsizes $0<\tau\le\bar\tau$, the errors of an algebraically stable and coercive Runge--Kutta method with stage order~$q$  and classical order~$p$ (with $2q\ge p$) are bounded by
$$
\| u_n -u(t_n) \| \le C \tau^{p},
$$
where $C$ is independent of $n$ and $\tau$ with $0\le n\tau\le T$.
\end{theorem}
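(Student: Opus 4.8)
The plan is to apply the perturbation Lemma~\ref{lem:pert-rk} with a reference solution chosen so that the effective defects are of size $\tau^p$ rather than of the stage-order size $\tau^{q+1}$. The starting observation is that the naive choice $u_n^\star=u(t_n)$, $U_{ni}^\star=u(t_n+c_i\tau)$ from Theorem~\ref{thm:conv-rk-stage-order} makes the \emph{step} defect $d_{n+1}$ equal to the quadrature error of the underlying rule; since classical order $p$ forces the quadrature conditions $\sum_i b_ic_i^{k-1}=1/k$ for $k\le p$, one already has $\|d_{n+1}\|+\|Sd_{n+1}\|\le C\tau^{p+1}$. Thus the \emph{only} obstruction to full order is the internal stage defect $D_{ni}$, which is merely $\bigo(\tau^{q+1})$. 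The idea is therefore to keep the step reference at the exact value $u_n^\star=u(t_n)$ (so that no intermediate-order terms pollute the final error) and to perturb only the internal stages.

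Concretely, I would Taylor-expand $D_{ni}=\sum_{k=q+1}^{p-1}\tau^k\eta_{ik}\,u^{(k)}(t_n)+\bigo(\tau^p)$ with coefficients $\eta_{ik}$ determined by the stage-order discrepancies, and seek a corrected reference $\widetilde U_{ni}^\star=u(t_n+c_i\tau)+\sum_{l=q+1}^{p-1}\tau^l\gamma_{il}(t_n)$ (retaining $\dot{\widetilde U}_{ni}^\star=-A(\widetilde U_{ni}^\star)\widetilde U_{ni}^\star$ as required by the lemma) for which the new stage defect is $\bigo(\tau^p)$. Inserting this ansatz and collecting equal powers of $\tau$ yields, at each order $l=q+1,\dots,p-1$, a \emph{linear} system for the correction functions $\gamma_{\cdot l}(t_n)$; linearity is exactly what the hypothesis $2q\ge p$ buys, because the quadratic-and-higher contributions of the corrections to $A(\widetilde U)\widetilde U$ are of size $\tau^{2q+2}\le\tau^{p+1}$ and hence negligible, so that only the bounded linearization of $v\mapsto A(v)v$—whose differentiability is granted by the hypotheses on $A$ and $B$—enters. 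Solvability of these systems rests on the invertibility of $\AA$ (coercivity) together with the classical order conditions, the latter being precisely what guarantees that the contributions of the stage corrections to the step update cancel through order $\tau^p$, so that the step defect of the corrected reference remains $\bigo(\tau^{p+1})$ even though the step value is held exactly at $u(t_n)$.

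The delicate point is the control in the $Y$-weighted norms entering Lemma~\ref{lem:pert-rk}: the $Y$-bound involves $\|SD_{ni}\|$ (that is, effectively two applications of $S$) and $\|Sd_{n+1}\|$, so one must estimate $S^k$ applied to defects whose building blocks are iterated expressions $A(u)u,\,A(u)A(u)u,\dots$ arising from the time derivatives $u^{(k)}$. This is the manifestation of order reduction: each power of $S$ would in principle lose regularity against the unbounded $A$. The generalized commutator condition \eqref{S-k}--\eqref{B-bound-k}, valid for $k=1,\dots,p-q$, is used to push each $S^k$ through the operators $A$ at the cost of the bounded remainders $B_k$, while the assumed regularity $S^ku\in C^{p+1-k}$ supplies finite $Y$-norms for the resulting quantities. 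Carrying this through, I would obtain $\|S\widetilde D_{ni}\|\le C\tau^p$, $\|\widetilde d_{n+1}/\tau\|\le C\tau^p$ and $\|S\widetilde d_{n+1}\|\le C\tau^{p+1}$ uniformly in $n$.

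With these estimates the conclusion is immediate: the hypotheses $SU_{ni}^\star\in Y$ and $\|U_{ni}^\star\|\le R$ of Lemma~\ref{lem:pert-rk} hold for the corrected reference for small $\tau$, the initial error vanishes, and the squared defect sum is $\tau\sum_{k}(m\,C^2\tau^{2p}+C^2\tau^{2p}+C^2\tau^{2p+2})\le C^2T\,\tau^{2p}$, whence $\|u_n-\widetilde u_n^\star\|\le C\tau^p$; since $\widetilde u_n^\star=u(t_n)$ this is already $\|u_n-u(t_n)\|\le C\tau^p$. The main obstacle I anticipate is the simultaneous bookkeeping of the two requirements—reducing the stage defect to order $p$ while \emph{preserving} the step defect at order $p+1$ with an unperturbed step value—together with the tracking of the $S$-powers through the quasi-linear terms; this is where the interplay of the classical order conditions, the hypothesis $2q\ge p$, and the iterated commutator conditions is indispensable, and where the analysis genuinely goes beyond the stage-order-plus-one argument of Theorem~\ref{thm:conv-rk-stage-order}.
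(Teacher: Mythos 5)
Your proposal is correct and follows essentially the same route as the paper: you correct only the internal stage references (keeping the step reference at the exact solution values), use the classical order conditions to show the step defect stays at order $p+1$ despite the stage corrections, invoke the iterated commutator conditions \eqref{S-k}--\eqref{B-bound-k} and the regularity of $S^ku$ to control the $S$-weighted defect norms, and conclude via Lemma~\ref{lem:pert-rk}. The paper realizes your order-by-order linear system implicitly, by iteratively subtracting the current defect ($U_i^{[j+1]}=U_i^{[j]}-D_i^{[j]}$) and gaining one power of $\tau$ per sweep at the cost of one application of $S$, which is an equivalent bookkeeping of the same correction.
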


\noindent
{\it Remark\/} The condition $2q\ge p$ simplifies the proof and is satisfied for the Gauss and Radau IIA methods, which are arguably the most interesting classes of implicit Runge--Kutta methods. We expect, however, that this condition can be dropped.

\begin{proof} (a)
Let us first show how we get from order of convergence $q+1$ to order $q+2$. We start by taking as  $U_{ni}^\star$ in \eqref{rk-int-star} the exact solution value at $t_n+c_i\tau$. In the following we can again take $n=0$ and drop the dependence on $n$ in the notation.
We then modify the reference internal stages by setting
$$
U_{i}^{[q+2]} = U_{i}^\star - D_{i}
$$
and $\dot U_{i}^{[q+2]} = - A(U_{i}^{[q+2]})U_{i}^{[q+2]}$. The modified defect $D_{i}^{[q+2]}$ in
the Runge--Kutta equations,
$$
U_{i}^{[q+2]}  = u(t_0) + \tau \sum_{j=1}^m a_{ij} \dot U_{j}^{[q+2]} + D_{i}^{[q+2]},
$$
is then
$$
D_{i}^{[q+2]} = \tau \sum_{j=1}^m a_{ij} \bigl( A(U_{j}^\star)U_{j}^\star - A(U_{j}^\star-D_j)(U_{j}^\star-D_j)\bigr),
$$
which by (K1)--(K3), by the Peano kernel formula for $D_i$ in the proof of Theorem~\ref{thm:conv-rk-stage-order} and the regularity assumption for the exact solution is bounded by
\begin{align*}
&\| D_{i}^{[q+2]} \| = | SD_{i}^{[q+2]} |
\\
&\le \tau \sum_{j=1}^m |a_{ij}| \bigl|S\bigl( A(U_{j}^\star)D_{j} + (A(U_{j}^\star)- A(U_{j}^\star-D_j))(U_{j}^\star-D_j)\bigr)\bigr|
\\
&\le  \tau \sum_{j=1}^m |a_{ij}|  \bigl| A(U_{j}^\star)SD_{j} + B(U_{j}^\star)SD_{j} +
(A(U_{j}^\star)- A(U_{j}^\star-D_j))(SU_{j}^\star-SD_j) \bigr.
\\
&\qquad\qquad\qquad+ \bigl. (B(U_{j}^\star)- B(U_{j}^\star-D_j))(SU_{j}^\star-SD_j) \bigr|
\\
&\le
c \,\tau\,\max_{1\le j \le m} \| SD_j \| \le C\tau^{q+2}.
\end{align*}
The defect in
$$
u(t_1)  = u(t_0) + \tau \sum_{i=1}^m b_i \dot U_{i}^{[q+2]} + d_1^{[q+2]}
$$
is then
$$
d_1^{[q+2]} = d_1+\tau \sum_{i=1}^m b_i \bigl( A(U_{i}^\star)U_{i}^\star - A(U_{i}^\star-D_i)(U_{i}^\star-D_i)\bigr),
$$
where we know already that
$$
\| d_1 \| \le C \tau^{p+1} \le C \tau^{q+3}
$$
in the case of interest where $p\ge q+2$. The more challenging term is
\begin{align*}
&\tau \sum_{i=1}^m b_i \Bigl( A(U_{i}^\star)U_{i}^\star - A(U_{i}^\star-D_i)(U_{i}^\star-D_i)\Bigr)
\\
&=
\tau \sum_{i=1}^m b_i \Bigl( A(U_{i}^\star)D_i - \int_0^1 A'(U_{i}^\star-\theta D_i)[D_i](U_{i}^\star-D_i)\, \d\theta \Bigr).
\end{align*}
This differs by $\bigo(\tau^{q+3})$ in the $Y$-norm from
$$
\tau \sum_{i=1}^m b_i \bigl( A(u(t_0))D_i -  A'(u(t_0))[D_i]u(t_0) \bigl) =0,
$$
because we have the quadrature error
$$
D_i = \tau^{q+1} \Bigl( \sum_{i=1} a_{ij} c_j^{q} -\frac {c_i^{q+1}}{q+1} \Bigr) u^{(q+1)}(t_0) + \bigo(\tau^{q+2}) ,
$$
and the order conditions for the $p$th-order Runge--Kutta method (see \cite{Butcher-book,HNW}) yield
$$
\sum_{i=1}^m b_i \Bigl( \sum_{i=1}^m a_{ij} c_j^{q} -\frac {c_i^{q+1}}{q+1} \Bigr) =0.
$$
Hence,
$$
\| d_1^{[q+2]}  \| \le C\tau^{q+3}.
$$
Lemma~\ref{lem:pert-rk} used with $U_{i}^{[q+2]}$ in the role of $U_i^\star$ then yields the result for $p=q+2$.

(b) The above procedure for modifying the reference internal stages can be repeated. In the next step we set
$$
U_{i}^{[q+3]} = U_{i}^{[q+2]} - D_{i}^{[q+2]}
$$
and so on we iterate up to $U_i^{[p]}$. Under the given regularity conditions we then obtain defects
$$
U_{i}^{[p]}  = u(t_0) + \tau \sum_{j=1}^m a_{ij} \dot U_{j}^{[p]} + D_{i}^{[p]}
$$
with
$$
\| D_{i}^{[p]} \| \le C\tau^{p},
$$
gaining a factor $\tau$ at the expense of an application of $S$ to the previous defect in every iteration. The defect $d_1^{[p]}$ in
$$
u(t_1)  = u(t_0) + \tau \sum_{i=1}^m b_i \dot U_{i}^{[p]} + d_1^{[p]}
$$
then becomes a more complicated expression than before, but the key observation is that it can be Taylor-expanded into terms of the form
$$
\tau^k \sum_{i,j_1,\ldots,j_r=1}^m b_i c_i^{\ell_0} a_{ij_1} c_{j_1}^{\ell_1} \ldots a_{j_{r-1}j_{r}} c_{j_{r}}^{\ell_r}
\Bigl( \sum_{j=1}^m a_{j_{r}j} c_j^{s-1} -\frac {c_{j_r}^{s}}{s} \Bigr)
$$
multiplied with an expression depending on the solution $u$ and its derivatives evaluated at~$t_0$.
We omit the details. For $k\le p$ these terms all vanish by the order conditions of the Runge--Kutta method \cite{Butcher-book,HNW}. In this way we obtain
$$
\| d_1^{[p]} \| \le C \tau^{p+1} \quad \text{ and } \quad \| S d_1^{[p]} \| \le C \tau^{p},
$$
and the result then follows again by Lemma~\ref{lem:pert-rk} with $U_i^{[p]}$ in the role of $U_i^\star$.
\qed
\end{proof}

\section*{Acknowledgement}
This work was supported by Deutsche Forschungsgemeinschaft, SFB 1173.


\providecommand{\bysame}{\leavevmode\hbox to3em{\hrulefill}\thinspace}
\providecommand{\MR}{\relax\ifhmode\unskip\space\fi MR }
\providecommand{\MRhref}[2]{%
  \href{http://www.ams.org/mathscinet-getitem?mr=#1}{#2}
}
\providecommand{\href}[2]{#2}

\end{document}